\title[]{An application of approach theory to the relative Hausdorff measure of non-compactness for the Wasserstein metric}
\author{Ben Berckmoes, Tim Hellemans, Mark Sioen and Jan Van Casteren}
\keywords{approach theory, Ascoli's Theorem, contractive approach structure, Dini's Theorem, (relative) Hausdorff measure of non-compactness, measure of non-uniform integrability, Prokhorov's Theorem, Wasserstein metric}
\thanks{The first author is post doctoral fellow at the Fund for Scientific Research of Flanders (FWO)}
\date{}
\DeclareMathOperator*{\mylim}{li\vphantom{p}m}
\DeclareMathOperator*{\myinf}{in\vphantom{p}f}
\begin{document}

\maketitle

\newtheorem{pro}{Proposition}[section]
\newtheorem{lem}[pro]{Lemma}
\newtheorem{thm}[pro]{Theorem}
\newtheorem{de}[pro]{Definition}
\newtheorem{co}[pro]{Comment}
\newtheorem{no}[pro]{Notation}
\newtheorem{vb}[pro]{Example}
\newtheorem{vbn}[pro]{Examples}
\newtheorem{gev}[pro]{Corollary}
\newtheorem{vrg}[pro]{Question}
\newtheorem{rem}[pro]{Remark}

\begin{abstract}
After shortly reviewing the fundamentals of approach theory as introduced by R. Lowen in 1989, we show that this theory is intimately related with the well-known Wasserstein metric on the space of probability measures with a finite first moment on a complete and separable metric space. More precisely, we introduce a canonical approach structure, called the contractive approach structure, and prove that it is metrized by the Wasserstein metric. The key ingredients of the proof of this result are Dini's Theorem, Ascoli's Theorem, and the fact that the class of real-valued contractions on a metric space has some nice stability properties. We then combine the obtained result with Prokhorov's Theorem to establish inequalities between the relative Hausdorff measure of non-compactness for the Wasserstein metric and a canonical measure of non-uniform integrability.
\end{abstract}

\section{Introduction}

Approach spaces were introduced in 1989 by R. Lowen as a unification of metric spaces and topological spaces (\cite{L89},\cite{Lo},\cite{L15},\cite{SV16}). Instead of quantifying the properties of a space $X$ by means of one metric, an approach space is determined by assigning to each point $x \in X$ a collection $\mathcal{A}_x$ of $[0,\infty]$-valued maps on $X$ which are interpreted as local distances based at $x$. By imposing suitable axioms on the collections $\mathcal{A}_x$, we obtain a structure on $X$ which, as in the case of metric spaces, allows us to deal with quantitative concepts such as asymptotic radius and center (\cite{AMS82},\cite{B85},\cite{L81},\cite{L01}) and Hausdorff measure of non-compactness (\cite{BG80},\cite{L88}) as used in functional analysis, especially in various areas of approximation theory (\cite{AMS82}), fixed-point theory (\cite{GK90}), operator theory (\cite{AKPRS92},\cite{PS88}), and Banach space geometry (\cite{DB86},\cite{KV07},\cite{WW96}). However, unlike metric spaces, approach spaces share many of the `structurally good' properties of topological spaces, such as e.g. the easy and canonical formation of product spaces.

The structural flexibility of approach theory entails the existence of canonical approach spaces in branches of mathematical analysis such as functional analysis (\cite{LS00},\cite{SV03},\cite{LV04},\cite{SV04},\cite{SV06},\cite{SV07}), hyperspace theory (\cite{LS96},\cite{LS98},\cite{LS00'}), domain theory (\cite{CDL11},\cite{CDL14},\cite{CDS14}), and probability theory and statistics (\cite{BLV11},\cite{BLV13},\cite{BLV16}). A careful study of these approach spaces has resulted in new insights and applications in these branches.

In this paper, we will use approach theory to study the relative Hausdorff measure of non-compactness for the well-known Wasserstein metric (\cite{V03}) on the space of probability measures with a finite first moment on a separable and complete metric space. The paper is structured as follows.

A brief overview of the basic notions of approach theory is given in Section \ref{sec:AppThy}. For a deep and complete treatment of the topic, we refer the reader to (\cite{Lo}) and (\cite{L15}).

In Section \ref{sec:WMAP}, we show that the Wasserstein metric is intimately related to approach theory. We introduce a canonical approach structure, called the contractive approach structure, on the set of probability measures with a finite first moment on a separable and complete metric space, and we prove that it is metrizable by the Wasserstein metric. The main ingredients of the proof consist of Dini's theorem, Ascoli's Theorem, and the fact that the class of contractions of a metric space into the real numbers has nice stability properties.

The relative Hausdorff measure of non-compactness for the Wasserstein metric is investigated in Section \ref{sec:HMWM}. Approach theory, and more precisely Theorem \ref{thm:AkW} obtained in Section \ref{sec:WMAP}, and Prokhorov's Theorem will turn out to be the essential tools to obtain in Theorem \ref{thm:muUImuH} inequalities between the Hausdorff measure of noncompactness for the Wasserstein metric and a canonical measure of non-uniform integrability. 

\section*{Acknowledgements}

The authors thank Bob Lowen and Yvik Swan for stimulating and interesting discussions.

\section{Approach theory}\label{sec:AppThy}

\subsection{Approach spaces}

Let $X$ be a non-empty set and $[0,\infty]^X$ the collection of maps of $X$ into $[0,\infty]$. For maps $\phi_1,\phi_2 \in [0,\infty]^X$ we shall always interpret their maximum and minimum pointwise, e.g. $\max\{\phi_1,\phi_2\}(x) = \max\{\phi_1(x),\phi_2(x)\}$. A subcollection $\mathcal{A}_0 \subset [0,\infty]^X$ is said to be {\em upwards-directed} iff for all $\phi_1, \phi_2 \in \mathcal{A}_0$ there exists $\phi \in \mathcal{A}_0$ such that $\max\{\phi_1,\phi_2\} \leq \phi$. A {\em functional ideal} on $X$ is an upwards-directed collection $\mathcal{A}_0 \subset [0,\infty]^X$ such that for each $\phi \in [0,\infty]^X$  
$$\left(\forall \varepsilon > 0, \forall \omega < \infty, \exists \phi_0 \in \mathcal{A}_0: \min\{\phi,\omega\} \leq \phi_0 + \varepsilon\right) \Rightarrow \phi \in \mathcal{A}_0.$$
Notice that functional ideals are closed under the formation of finite maxima.

An {\em approach structure} on $X$ is an assignment $\mathcal{A}$ of a functional ideal $\mathcal{A}_x$ on $X$ to each point $x \in X$ such that for each $x \in X$ and each $\phi \in \mathcal{A}_x$
\begin{itemize}
	\item[(A1)] $\phi(x) = 0,$
	\item[(A2)] $\forall \varepsilon > 0, \forall \omega < \infty, \exists \left(\phi_x\right)_x \in \Pi_{x \in X} \mathcal{A}_x, \forall y,z \in X :$
	\begin{displaymath}
	\min\{\phi(y),\omega\} \leq \phi_x(z) + \phi_z(y) + \varepsilon.
	  \end{displaymath}
\end{itemize}
If $\mathcal{A}$ is an approach structure on $X$, then $(X,\mathcal{A})$ is called an {\em approach space}. In an approach space $(X,\mathcal{A})$, a map $\phi \in \mathcal{A}_x$ is interpreted as a local distance based at $x$.

A {\em basis for an approach structure $\mathcal{A}$} on $X$ is an assignment $\mathcal{B}$ of a collection $\mathcal{B}_{x} \subset \mathcal{A}_x$ to each point  $x \in X$ such that for each $x \in X$
\begin{displaymath}
\forall \phi \in \mathcal{A}_x, \forall \varepsilon > 0, \forall \omega < \infty, \exists \psi \in \mathcal{B}_{x} : \min\{\phi,\omega\} \leq \psi + \varepsilon.
 \end{displaymath}
 We will also say that $\mathcal{B}$ {\em generates} $\mathcal{A}$.
 
The following result provides a common method to introduce approach structures on a set.

\begin{pro}\label{pro:ApproachGems}
Let $\mathcal{B}$ be an assignment of a non-empty collection $\mathcal{B}_x \subset [0,\infty]^X$ to each point $x \in X$. Then there exists a unique approach structure $\mathcal{A}$ on $X$ such that $\mathcal{B}$ is a basis for $\mathcal{A}$ if and only if each $\mathcal{B}_{x}$ is upwards-directed and for each $x \in X$ and $\psi \in \mathcal{B}_{x}$
\begin{itemize}
	\item[(BA1)] $\psi(x) = 0,$
	\item[(BA2)] $\forall \varepsilon > 0, \forall \omega < \infty, \exists \left(\psi_x\right)_x \in \Pi_{x \in X} \mathcal{B}_{x}, \forall y,z \in X : $ 
	\begin{displaymath}
	\min\{\psi(y),\omega\} \leq \psi_x(z) + \psi_z(y) + \varepsilon.
	\end{displaymath}
\end{itemize}
\end{pro}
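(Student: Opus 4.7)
The plan is to handle the ``only if'' direction by routine verification, and to spend the effort on the converse, which is really a theorem of \emph{generation} of a functional ideal from a filterbase-like collection. Along the way I would expect the $\min\{\cdot,\omega\}$-truncations together with the $\varepsilon$'s to be the main bookkeeping nuisance; the conceptual step is small, but it has to be carried out uniformly in all the parameters.

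For the ``only if'' direction, assume $\mathcal{B}$ is a basis for some approach structure $\mathcal{A}$. Then $\mathcal{B}_x\subset\mathcal{A}_x$ forces (BA1) from (A1). For (BA2), pick $\psi\in\mathcal{B}_x$; apply (A2) to $\psi$ (with $\varepsilon/2$ and $\omega$) to obtain a family $(\phi_u)_u\in\Pi_u\mathcal{A}_u$ realising the inequality, then use the basis approximation on each $\phi_u$ (again with $\varepsilon/4$ and $\omega$) to get $\psi_u\in\mathcal{B}_u$ with $\min\{\phi_u,\omega\}\leq\psi_u+\varepsilon/4$. A short case distinction on whether $\phi_u(z)+\phi_z(y)+\varepsilon/2$ exceeds $\omega$ or not (essentially truncating the sum on the right by $\omega$) lets one replace the $\phi$'s by the $\psi$'s at the cost of an extra $\varepsilon/2$, which is exactly (BA2). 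Upwards-directedness of each $\mathcal{B}_x$ will be taken as part of the notion of basis (inherited from the surrounding functional ideal in a strong sense), or else added as a hypothesis, as the statement does.

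For the ``if'' direction, \emph{uniqueness} is forced: if $\mathcal{A}$ admits $\mathcal{B}$ as a basis, then the basis condition together with the defining implication of a functional ideal says that $\phi\in\mathcal{A}_x$ if and only if for every $\varepsilon>0$ and $\omega<\infty$ there exists $\psi\in\mathcal{B}_x$ with $\min\{\phi,\omega\}\leq\psi+\varepsilon$. This dictates the definition of $\mathcal{A}_x$ used to prove \emph{existence}. I would set
\[
\mathcal{A}_x=\bigl\{\phi\in[0,\infty]^X:\forall\varepsilon>0,\forall\omega<\infty,\exists\psi\in\mathcal{B}_x,\ \min\{\phi,\omega\}\leq\psi+\varepsilon\bigr\},
\]
and verify in turn the four things needed. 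Upwards-directedness of $\mathcal{A}_x$ is immediate from that of $\mathcal{B}_x$, using $\min\{\max\{\phi_1,\phi_2\},\omega\}=\max\{\min\{\phi_1,\omega\},\min\{\phi_2,\omega\}\}$ and choosing $\psi\in\mathcal{B}_x$ dominating the two basis approximants. The functional ideal closure is a standard $\varepsilon/2+\varepsilon/2$ argument: approximate $\phi$ at scale $\varepsilon/2$ by some $\phi_0\in\mathcal{A}_x$, then approximate that $\phi_0$ at scale $\varepsilon/2$ by $\psi\in\mathcal{B}_x$, and split on whether $\phi_0(y)\leq\omega$. Axiom (A1) is immediate by evaluating at $x$ and sending $\varepsilon\to0$ after (BA1). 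For (A2), given $\phi\in\mathcal{A}_x$ choose $\psi\in\mathcal{B}_x$ with $\min\{\phi,\omega\}\leq\psi+\varepsilon/2$; apply (BA2) to $\psi$ at scale $\varepsilon/2$ to produce $(\psi_u)_u\in\Pi_u\mathcal{B}_u\subset\Pi_u\mathcal{A}_u$, and finally observe that $\min\{\phi(y),\omega\}\leq\min\{\psi(y),\omega\}+\varepsilon/2$ (case analysis on $\psi(y)$ versus $\omega$) so that the chain of inequalities closes up to $\varepsilon$. Once these checks are in place, $\mathcal{B}\subset\mathcal{A}$ holds by construction and the basis property is the very definition of $\mathcal{A}_x$, completing the proof.
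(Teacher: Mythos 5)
Your argument is correct. Note that the paper states Proposition \ref{pro:ApproachGems} without proof (it is quoted from the standard approach-theory literature), so there is nothing to compare against; your construction of $\mathcal{A}_x$ as the collection of all $\phi$ approximable from $\mathcal{B}_x$ in the $\min\{\cdot,\omega\}+\varepsilon$ sense, followed by the verification of upwards-directedness, saturation, (A1) and (A2) via the $\varepsilon/2$-splittings you describe, is exactly the standard proof, and your uniqueness argument (saturation forces any approach structure admitting $\mathcal{B}$ as a basis to equal this $\mathcal{A}$) is the right one. Your caveat about upwards-directedness in the ``only if'' direction is also well taken: with the paper's definition of basis (which does not require each $\mathcal{B}_x$ to be upwards-directed), directedness of $\mathcal{B}_x$ does not follow from the basis property, so that half of the equivalence really does need upwards-directedness built into the notion of basis (as in Lowen's original formulation) rather than derived; treating it as part of the hypothesis, as you do, is the intended reading.
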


Recall that a {\em metric} on $X$ is a map $m$ of $X \times X$ into $\mathbb{R}^+$ such that
\begin{itemize}
	\item[(M1)] $\forall x \in X : m(x,x) = 0,$
	\item[(M2)] $\forall x,y \in X : m(x,y) = m(y,x),$
	\item[(M3)] $\forall x,y,z \in X : m(x,y) \leq m(x,z) + m(z,y).$
\end{itemize}

Let $m$ be a metric on $X$ and assign to each point $x \in X$ the collection
\begin{displaymath}
\mathcal{B}_{m,x} = \{m(x,\cdot)\},
\end{displaymath}
where $m(x,\cdot)$ stands for the map
\begin{displaymath}
m(x,\cdot) : X \rightarrow \mathbb{R}^+ : y \mapsto m(x,y).
\end{displaymath}
Then it follows from Proposition \ref{pro:ApproachGems} that there exists a unique approach structure $\mathcal{A}_m$ on $X$ such that $\left(\mathcal{B}_{m,x}\right)_{x \in X}$ is a basis for $\mathcal{A}_m$. It is not hard to establish that
\begin{displaymath}
\mathcal{A}_{m,x} = \left\{\phi \in \left(\mathbb{R}^+\right)^X \mid \phi \leq m(x,\cdot)\right\}.
\end{displaymath}
We call $\mathcal{A}_m$ the {\em approach structure underlying $m$}\index{underlying!approach structure of a metric}.  An approach structure $\mathcal{A}$ is said to be {\em metrizable}\index{metrizable approach structure} iff there exists a metric $m$ such that $\mathcal{A} = \mathcal{A}_{m}$.

Approach spaces form the appropriate axiomatic setting for a quantitative analysis based on numerical indices measuring to what extent topological properties (such as convergence and compactness) fail to be valid. In the next subsections, we give a brief outline of how such an analysis can be developed. Throughout,  $X = \left(X,\mathcal{A}\right)$ will be an approach space. 

\subsection{The associated topology}

For $x \in X$, $\phi \in \mathcal{A}_x$, and $\varepsilon > 0$, we define the {\em $\phi$-ball with center $x$ and radius $\varepsilon$} as the set 
$$B_{\phi}(x,\varepsilon) = \left\{y \in X \mid \phi(y) < \varepsilon\right\}.$$
More loosely, we will also refer to the latter set as a ball with center $x$ or a ball with radius $\varepsilon$. 

Consider a point $x \in X$ and a set $A \subset X$. We say that $x$ belongs to the {\em closure of $A$}\index{closure} iff each ball with center $x$ contains a point of $A$, and to the {\em interior of $A$}\index{interior} iff $A$ contains a ball with center $x$. We denote the closure of $A$ as $\overline{A}$, and the interior of $A$ as $A^\circ$.
We call a set $A \subset X$ {\em closed}\index{closed set} iff $\overline{A} = A$, and {\em open}\index{open set} iff $A^\circ = A$. 

It is not hard to establish that the collection of closed sets in $X$ contains $\emptyset$ and $X$ and is closed under the formation of finite unions and arbitrary intersections. Furthermore, a set is open if and only if its complement is closed. In particular, the collection of open sets in $X$ contains $\emptyset$ and $X$ and is closed under the formation of finite intersections and arbitrary unions. We infer that the open sets in $X$ define a topology, $\mathcal{T}_{\mathcal{A}}$, which will be called the {\em topology associated with} $\mathcal{A}$. If $\mathcal{A} = \mathcal{A}_m$ for a metric $m$, then the topology associated with $\mathcal{A}$ coincides with the usual topology derived from $m$.  

In the remainder of this section, each topological notion (such as e.g. convergence and compactness) should be interpreted in the topological space $(X,\mathcal{T}_{\mathcal{A}})$.

\subsection{The associated quasimetric}

A {\em quasimetric} on $X$ is a map $d$ of  $X \times X$ into $\mathbb{R}^+$  which satisfies the properties (M1) and (M3) of a metric.

We call $X$ {\em locally bounded} iff for all $(x,y) \in X \times X$ there exists a constant $C > 0$ such that  $\phi(y) \leq C$ for all $\phi \in \mathcal{A}_x$.

If $X$ is locally bounded, put, for $x,y \in X$,
$$d_{\mathcal{A}}(x,y) = \sup_{\phi \in \mathcal{A}_x}\phi(y).$$
One easily verifies that $d_{\mathcal{A}}$ is the smallest quasimetric on $X$ with the property that $\phi \leq d_{\mathcal{A}}(x,\cdot)$ for all $x \in X$ and $\phi \in \mathcal{A}_x$. We call $d_{\mathcal{A}}$ the {\em quasimetric associated with} $\mathcal{A}$. If $\mathcal{A} = \mathcal{A}_m$ for a metric $m$, then $d_{\mathcal{A}} = m$.

Where needed, we shall assume that $X$ is locally bounded.

\subsection{Convergence}

Throughout the paper, we will make use of the convergence theory of nets. For the basic facts about this theory, we refer the reader to \cite{W70}. Consider a net $\left(x_\eta\right)_\eta$ in $X$, a point $x \in X$, and $\varepsilon > 0$. We say that $\left(x_\eta\right)_\eta$ is {\em $\varepsilon$-convergent to $x$} iff each ball $B$ with center $x$ and radius $\varepsilon$ eventually contains $(x_\eta)_\eta$. We define the {\em limit operator of $\left(x_\eta\right)_\eta$ at $x$} as 
$$\lambda\left(x_\eta \rightarrow x\right) = \inf\left\{\alpha > 0 \mid (x_\eta)_\eta \text{ is  $\alpha$-convergent to } x\right\}.$$
Notice that the latter number is zero if and only if $x_\eta \rightarrow x$. If $\mathcal{B}$ is a basis for $\mathcal{A}$, then one easily verifies that
$$\lambda(x_\eta \rightarrow x) = \sup_{\phi \in \mathcal{B}_{x}} \limsup_{\eta} \phi(x_\eta).$$
Therefore, if $\mathcal{A} = \mathcal{A}_m$ for a metric $m$,
\begin{equation}
\lambda(x_\eta \rightarrow x) = \limsup_{\eta} m(x,x_\eta).\label{eq:LimOpM}
\end{equation}
Notice that if $(x_\eta)_\eta$ is convergent (in the topological space $(X,\mathcal{T}_\mathcal{A})$), then $\myinf_{x \in X} \lambda(x_\eta \rightarrow x) = 0$. We call $X$ {\em complete} iff the converse holds, i.e. if each net $(x_\eta)_\eta$ with the property $\myinf_{x \in X} \lambda(x_\eta \rightarrow x) = 0$ is convergent. If $\mathcal{A} = \mathcal{A}_m$ for a metric $m$, then $\mathcal{A}$ is complete if and only if  $m$ is complete in the classical sense.

\subsection{Compactness}\label{subsec:compactness}

If $\left(\Phi = \left(\phi_x\right)_x\right) \in \Pi_{x \in X} \mathcal{A}_x,$
then a set $B \subset X$ is called a {\em $\Phi$-ball} iff there exist $x \in X$ and $\alpha > 0$ such that $B = B_{\phi_x}(x,\alpha)$.  Let $A \subset X$. We say that a collection $\mathcal{V}$ of sets $V \subset X$ {\em covers} $A$ iff $A \subset \cup_{V \in \mathcal{V}} V$. We call $A$ {\em$\varepsilon$-relatively compact} iff it holds for each $\Phi \in \Pi_{x \in X} \mathcal{A}_x$ that $A$ can be covered with finitely many $\Phi$-balls with radius $\varepsilon$, and we define the {\em relative measure of non-compactness of $A$} as 
$$\mu_{rc}(A) = \inf \{\alpha > 0 \mid \textrm{$A$ is $\alpha$-relatively compact}\}.$$
If $\mathcal{B}$ is a basis for $\mathcal{A}$, then
$$\mu_{rc}(A) = \sup_{\left(\phi_x\right)_x \in \Pi_{x \in X} \mathcal{B}_{x}} \myinf_{\substack{Y \subset X\\\textrm{finite}}} \sup_{a \in A} \myinf_{y \in Y} \phi_y(a).$$
Therefore, if $\mathcal{A} = \mathcal{A}_m$ for a metric $m$,
$$\mu_{rc}(A) = \myinf_{\substack{Y \subset X\\\textrm{finite}}} \sup_{a \in A} \myinf_{y \in Y} m(y,a),$$
which is known as the {\em relative Hausdorff measure of non-compactness for $m$} (\cite{BG80},\cite{L88},\cite{WW96}).

The following result links the relative measure of non-compactness to the limit operator.

\begin{thm}\label{thm:compactness}
For $A \subset X$,
\begin{displaymath}
\mu_{rc}(A) = \sup_{(x_\eta)_\eta} \myinf_{(x_{h(\eta^\prime)})_{\eta^\prime}} \myinf_{x \in X} \lambda(x_{h(\eta^\prime)} \rightarrow x),
\end{displaymath}
the supremum taken over all nets $(x_\eta)_\eta$ in $A$, and the first infimum over all subnets $(x_{h(\eta^\prime)})_{\eta^\prime}$ of $(x_\eta)_\eta$. Furthermore, if $A$ is relatively compact, then $\mu_{rc}(A) = 0$, and the converse holds if $X$ is complete.
\end{thm}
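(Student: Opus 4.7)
The plan is to prove the two inequalities in the displayed identity separately, and then to deduce the two parts of the compactness characterisation from the identity combined with the definition of completeness.

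For $\mu_{rc}(A) \leq \sup_{(x_\eta)_\eta}\myinf_{(x_{h(\eta')})_{\eta'}}\myinf_{x\in X}\lambda(x_{h(\eta')}\to x)$, I would exhibit a witnessing net. Given $\gamma < \mu_{rc}(A)$, the basis formula for $\mu_{rc}$ yields some $\Phi = (\phi_x)_x \in \Pi_{x \in X}\mathcal{B}_{x}$ with $\myinf_{Y} \sup_{a \in A} \myinf_{y \in Y} \phi_y(a) > \gamma$; so for each finite $Y \subset X$ one may pick $a_Y \in A$ with $\phi_y(a_Y) > \gamma$ for all $y \in Y$. Directing the finite subsets of $X$ by inclusion gives a net $(a_Y)_Y$ in $A$. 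For any subnet $(a_{h(\eta')})_{\eta'}$ and any $x \in X$, cofinality of $h$ forces $x \in h(\eta')$ eventually, whence $\phi_x(a_{h(\eta')}) > \gamma$ eventually, so $\lambda(a_{h(\eta')}\to x) \geq \limsup_{\eta'} \phi_x(a_{h(\eta')}) \geq \gamma$. Taking infima over $x$ and over subnets, then supremum over nets in $A$, delivers the bound.

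For the reverse inequality, I would fix a net $(x_\eta)_\eta$ in $A$ and a threshold $\gamma > \mu_{rc}(A)$, and pass to a universal subnet $(u_{\eta'})_{\eta'}$ of $(x_\eta)_\eta$. The main obstacle is to produce a single point $x \in X$ with $\lambda(u_{\eta'}\to x)\leq\gamma$, since $\gamma$-relative compactness supplies, for each $\Phi$, only a finite cover whose centres may vary wildly with $\Phi$. I plan to resolve this by contradiction. If no such $x$ existed, then the identity $\lambda(u_{\eta'}\to x)=\sup_{\phi\in\mathcal{B}_{x}}\limsup_{\eta'}\phi(u_{\eta'})$ would yield, for each $x\in X$, some $\phi_x \in \mathcal{B}_{x}$ with $\limsup_{\eta'}\phi_x(u_{\eta'})>\gamma$; the axiom of choice then assembles these into a single $\Phi = (\phi_x)_x \in \Pi_{x\in X}\mathcal{B}_{x}$. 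Because $\gamma > \mu_{rc}(A)$, this very $\Phi$ admits a finite $Y \subset X$ with $A \subset \bigcup_{y\in Y} B_{\phi_y}(y,\gamma)$; universality of $(u_{\eta'})_{\eta'}$ together with the finiteness of $Y$ forces the subnet to lie eventually in a single $B_{\phi_{y^*}}(y^*,\gamma)$, so $\limsup_{\eta'}\phi_{y^*}(u_{\eta'})\leq\gamma$, contradicting the choice of $\phi_{y^*}$. This gives the point $x$ and completes the inequality.

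The compactness statements are then direct consequences. If $A$ is relatively compact, every net in $A$ has a topologically convergent subnet, for which $\lambda = 0$, so the right-hand side of the identity is $0$ and hence $\mu_{rc}(A)=0$. Conversely, assume $X$ is complete and $\mu_{rc}(A)=0$. Given any net $(x_\eta)_\eta$ in $A$, I would apply the argument of the previous paragraph to a universal subnet $(u_{\eta'})_{\eta'}$ with $\gamma = 1/n$ for each $n\geq 1$, producing points $x_n\in X$ with $\lambda(u_{\eta'}\to x_n)\leq 1/n$; hence $\myinf_{x\in X}\lambda(u_{\eta'}\to x)=0$, and completeness of $X$ yields topological convergence of $(u_{\eta'})_{\eta'}$. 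Every net in $A$ thus has a convergent subnet, so $A$ is relatively compact.
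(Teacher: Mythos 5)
The paper itself states this theorem without proof (it is background quoted from approach theory), so there is no argument of the authors to compare yours against; judged on its own, your proof of the displayed identity is correct and is the standard one. The lower bound via the net $(a_Y)_Y$ indexed by the finite subsets of $X$ works: monotonicity plus cofinality of $h$ do force $x \in h(\eta')$ eventually, hence $\limsup_{\eta'}\phi_x(a_{h(\eta')}) \geq \gamma$ for every $x$ and every subnet. The upper bound via a universal subnet is also sound: if no $x$ worked, the chosen $\Phi=(\phi_x)_x$ would admit a finite cover of $A$ by $\Phi$-balls of radius $\gamma$, and a universal net contained in a finite union must eventually lie in one member of it, giving the contradiction. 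The forward compactness implication is likewise fine.

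The one genuine gap is the very last inference of your converse. What your argument actually delivers is that every net in $A$ has a convergent subnet; passing from this to compactness of $\overline{A}$ is a purely topological step that holds when $\mathcal{T}_{\mathcal{A}}$ is regular (in particular in the metrizable situation in which the paper actually uses the theorem, where it is the classical ``totally bounded and complete'' argument), but it fails in general: a singleton can have a non-compact closure in a non-regular space, every net in it trivially converges, and every topology arises as the topology associated with some (complete) approach structure. So you should either insert the regularity/diagonal argument valid in the setting at hand, or make explicit that ``relatively compact'' is being read as ``every net in $A$ admits a convergent subnet''; without one of these, the final sentence of the proof does not follow for an arbitrary complete approach space.
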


\subsection{Contractions}

Let $f$ be a map of $X$ into an approach space $Y = \left(Y, \mathcal{A}^\prime\right)$ and $x \in X$. We say that $f$ is {\em contractive at $x$} iff
$$\forall \phi \in \mathcal{A}_{f(x)}^\prime :  \phi \circ f \in \mathcal{A}_x,$$
and that $f$ is a {\em contraction}  iff it is contractive at every $x \in X$. 

The following result characterizes the contractive property in terms of the limit operator. 

\begin{thm}\label{pro:CharacContraction}
A map $f : X \rightarrow Y$ is contractive at $x$ if and only if for every net  $\left(x_\eta\right)_\eta$ in $X$
$$\lambda\left(f(x_\eta) \rightarrow f(x)\right) \leq \lambda\left(x_\eta \rightarrow x\right).$$
\end{thm}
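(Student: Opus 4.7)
The plan is to prove both implications directly via the limit operator formula $\lambda(x_\eta\to x)=\sup_{\phi\in\mathcal{A}_x}\limsup_\eta \phi(x_\eta)$, which follows from the basis formula in the excerpt by taking $\mathcal{A}$ as a basis for itself.

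For the forward direction, assume $f$ is contractive at $x$, and let $(x_\eta)_\eta$ be a net in $X$. For every $\phi\in\mathcal{A}_{f(x)}'$, contractivity gives $\phi\circ f\in\mathcal{A}_x$, so
\begin{displaymath}
\limsup_\eta \phi(f(x_\eta))=\limsup_\eta (\phi\circ f)(x_\eta)\leq \sup_{\psi\in\mathcal{A}_x}\limsup_\eta \psi(x_\eta)=\lambda(x_\eta\to x).
\end{displaymath}
Taking the supremum over $\phi\in\mathcal{A}_{f(x)}'$ yields $\lambda(f(x_\eta)\to f(x))\leq\lambda(x_\eta\to x)$. This half is a short chase through the definitions.

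The harder direction is the converse. Assuming the net inequality, I would fix $\phi\in\mathcal{A}_{f(x)}'$ and prove $\phi\circ f\in\mathcal{A}_x$ by verifying the defining ideal property: given $\varepsilon>0$ and $\omega<\infty$, find $\psi_0\in\mathcal{A}_x$ with $\min\{\phi\circ f,\omega\}\leq\psi_0+\varepsilon$. I would argue by contradiction: if no such $\psi_0$ exists, then for each $\psi\in\mathcal{A}_x$ there is $y_\psi\in X$ with $\min\{\phi(f(y_\psi)),\omega\}>\psi(y_\psi)+\varepsilon$. Index the net $(y_\psi)_{\psi\in\mathcal{A}_x}$ by $\mathcal{A}_x$ ordered by pointwise $\leq$ (upwards-directed, hence a directed set). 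The key observation is that this inequality implies both (i) $\psi(y_\psi)<\omega-\varepsilon$ and (ii) $\phi(f(y_\psi))>\psi(y_\psi)+\varepsilon$.

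From (i), for any fixed $\psi'\in\mathcal{A}_x$, once $\psi\geq\psi'$ in the tail we have $\psi'(y_\psi)\leq\psi(y_\psi)<\omega-\varepsilon$, so $\limsup_\psi \psi'(y_\psi)\leq\omega-\varepsilon$; taking the sup over $\psi'$ gives $L:=\lambda(y_\psi\to x)\leq\omega-\varepsilon<\infty$. Moreover, the tail argument also shows $\limsup_\psi \psi(y_\psi)\geq\limsup_\psi \psi'(y_\psi)$ for every $\psi'$, hence $\limsup_\psi \psi(y_\psi)\geq L$. Combining this with (ii),
\begin{displaymath}
\lambda(f(y_\psi)\to f(x))\geq \limsup_\psi \phi(f(y_\psi))\geq \limsup_\psi \psi(y_\psi)+\varepsilon\geq L+\varepsilon>L=\lambda(y_\psi\to x),
\end{displaymath}
contradicting the net hypothesis and completing the proof.

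The main obstacle is the converse: it is not immediately obvious how to produce a violating net. The trick is to index the net by the directed set $\mathcal{A}_x$ itself, so that the diagonal $\psi\mapsto\psi(y_\psi)$ dominates every $\limsup_\psi \psi'(y_\psi)$ in the tail, turning pointwise bad choices into a genuine limit-operator discrepancy. The cutoff $\omega$ in the functional ideal axiom is crucial to ensure $L$ is finite so that the additive gap $+\varepsilon$ actually contradicts the inequality.
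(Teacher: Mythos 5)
Your argument is correct. The paper states this theorem without proof (it is a standard fact from the approach-theory literature), so there is nothing to compare against line by line; on its own merits, though, the proposal holds up. The forward direction is the routine computation via the formula $\lambda(x_\eta \to x) = \sup_{\phi \in \mathcal{A}_x} \limsup_\eta \phi(x_\eta)$, using that $\mathcal{A}$ is a basis for itself. The converse is where care is needed, and your construction works: negating the functional-ideal criterion for $\phi \circ f$ produces, for some fixed $\varepsilon > 0$ and $\omega < \infty$, a family $(y_\psi)_{\psi \in \mathcal{A}_x}$ with $\min\{\phi(f(y_\psi)),\omega\} > \psi(y_\psi) + \varepsilon$; indexing this as a net over the directed set $(\mathcal{A}_x, \leq)$ makes the diagonal $\psi \mapsto \psi(y_\psi)$ eventually dominate each fixed $\psi'(y_{\psi})$, so $\limsup_\psi \psi(y_\psi) \geq \lambda(y_\psi \to x) =: L$, while the cutoff $\omega$ guarantees $L \leq \omega - \varepsilon < \infty$, so that $\lambda(f(y_\psi) \to f(x)) \geq L + \varepsilon$ is a genuine strict violation of the hypothesis. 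Both of the points you flag as essential (the diagonal domination and the finiteness of $L$ coming from the $\omega$-truncation) are indeed exactly the places where a naive attempt would break down, and you handle them correctly.
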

 
One easily derives from Theorem \ref{pro:CharacContraction} that contractions between approach spaces are continuous, and that, if $\mathcal{A} = \mathcal{A}_m$ for a metric $m$ and $\mathcal{A}^\prime = \mathcal{A}_{m^\prime}$ for a metric $m^\prime$, a map $f : X \rightarrow Y$ is contractive if and only if it is contractive in the metric sense, i.e. $m^\prime(f(x),f(y)) \leq m(x,y)$ for all $x,y \in X$.

\subsection{Weak approach structures}

Let $\mathcal{A}^\prime$ be an approach structure on $X$. We say that {\em $\mathcal{A}^\prime$ is  weaker than $\mathcal{A}$} or, equivalently, that {\em $\mathcal{A}$ is stronger than $\mathcal{A}^\prime$} iff the identity map
\begin{displaymath}
1_X : \left(X,\mathcal{A}\right) \rightarrow \left(X,\mathcal{A}^\prime\right) : x \mapsto x
\end{displaymath}
is a contraction. Notice that $\mathcal{A}^\prime$ is weaker than $\mathcal{A}$ if and only if $\mathcal{A}_x^\prime \subset \mathcal{A}_x$ for each $x \in X$.

It easily follows from Theorem \ref{pro:CharacContraction} that

\begin{thm}\label{thm:characweak}
$\mathcal{A}^\prime$ is weaker than $\mathcal{A}$ if and only if, for each net $\left(x_\eta\right)_\eta$ in $X$ and each point $x \in X$,
$$\lambda_{\mathcal{A}^\prime}(x_\eta \rightarrow x) \leq \lambda_{\mathcal{A}}(x_\eta \rightarrow x).$$
In particular, $\mathcal{A}^\prime = \mathcal{A}$ if and only if, for each net $\left(x_\eta\right)_\eta$ in $X$ and each point $x \in X$,
$$\lambda_{\mathcal{A}^\prime}(x_\eta \rightarrow x) = \lambda_{\mathcal{A}}(x_\eta \rightarrow x).$$

\end{thm}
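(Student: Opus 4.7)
The plan is to apply Theorem \ref{pro:CharacContraction} directly to the identity map, exploiting the fact that the notion of "weaker" has been explicitly defined in terms of contractivity of $1_X$. Unpacking the definition, $\mathcal{A}^\prime$ is weaker than $\mathcal{A}$ iff the map $1_X : (X,\mathcal{A}) \to (X,\mathcal{A}^\prime)$ is a contraction, i.e.\ iff $1_X$ is contractive at every point $x \in X$. By Theorem \ref{pro:CharacContraction} applied to $f = 1_X$ at the point $x$, this contractivity at $x$ is equivalent to
$$\lambda_{\mathcal{A}^\prime}(1_X(x_\eta) \rightarrow 1_X(x)) \leq \lambda_{\mathcal{A}}(x_\eta \rightarrow x)$$
for every net $(x_\eta)_\eta$ in $X$; since $1_X(x_\eta) = x_\eta$ and $1_X(x) = x$, this is precisely the desired inequality. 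Quantifying over all $x$ gives the first equivalence.

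For the "in particular" part, I would simply apply the first equivalence twice: $\mathcal{A}^\prime = \mathcal{A}$ is the conjunction of $\mathcal{A}^\prime$ being weaker than $\mathcal{A}$ and $\mathcal{A}$ being weaker than $\mathcal{A}^\prime$, which translates into the two inequalities $\lambda_{\mathcal{A}^\prime} \leq \lambda_{\mathcal{A}}$ and $\lambda_{\mathcal{A}} \leq \lambda_{\mathcal{A}^\prime}$ (on all nets and at all points); together these are equivalent to equality of the two limit operators.

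There is no substantive obstacle here: the proof is essentially a one-line unpacking of the definition of "weaker" combined with Theorem \ref{pro:CharacContraction}. The only minor care needed is to note that the comparison $\mathcal{A}_x^\prime \subset \mathcal{A}_x$ pointed out in the paragraph preceding the theorem ensures that the two characterizations of "weaker" (via inclusion of functional ideals and via contractivity of $1_X$) match up, so that one may indeed invoke Theorem \ref{pro:CharacContraction} without further verification.
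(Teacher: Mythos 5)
Your proof is correct and follows exactly the route the paper intends: the paper gives no explicit argument, merely asserting that the theorem "easily follows from Theorem \ref{pro:CharacContraction}", and your application of that theorem to the identity map, together with the antisymmetry observation via $\mathcal{A}'_x \subset \mathcal{A}_x$, is precisely that argument spelled out.
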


Many interesting approach structures arise naturally in various areas in mathematical analysis. This is mainly due to the fact that approach spaces were designed in such a way that they allow for a lot of `structural flexibility'. This metaprinciple is captured by the following theorem (\cite{Lo}).

\begin{thm}\label{thm:StructureThmApp}
Consider a set $Y$ and an indexed collection of maps
$$\left(f_i : Y \rightarrow Y_i\right)_{i \in I},$$
where each $Y_i = \left(Y_i,\mathcal{A}_i\right)$ is an approach space. Then there exists a weakest approach structure $\mathcal{A}_{w}$ on $Y$ with the property that each map
$$f_i: \left(Y,\mathcal{A}_{w}\right) \rightarrow Y_i$$
is contractive. Moreover, putting for $y \in Y$
$$\mathcal{B}_{y} = \left\{\max_{k \in K} \phi_k \circ f_k \mid K \subset I \textrm{ finite}, \forall k \in K : \phi_k \in \mathcal{A}_{k,f_k(y)}\right\},$$
it follows that $\left(\mathcal{B}_y\right)_{y \in Y}$ is a basis for $\mathcal{A}_w$. Finally, $\mathcal{A}_w$ is characterized by the property that it holds for every map
$$f : Z \rightarrow \left(Y,\mathcal{A}_w\right),$$
where $Z$ is an approach space, that $f$ is contractive if and only if each map
$$f_i \circ f : Z \rightarrow Y_i$$
is contractive.
\end{thm}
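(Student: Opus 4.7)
The plan is to verify that the family $(\mathcal{B}_y)_{y \in Y}$ described in the statement satisfies the hypotheses of Proposition \ref{pro:ApproachGems}, thereby producing a canonical approach structure $\mathcal{A}_w$, and then to establish its minimality and universal property. Upward-directedness of $\mathcal{B}_y$ is straightforward: given $\max_{k \in K_1} \phi_k^1 \circ f_k$ and $\max_{k \in K_2} \phi_k^2 \circ f_k$, set $K := K_1 \cup K_2$ and, for $k \in K_1 \cap K_2$, pick an upper bound $\phi_k \in \mathcal{A}_{k, f_k(y)}$ of $\{\phi_k^1,\phi_k^2\}$ using the upward-directedness of the functional ideal $\mathcal{A}_{k, f_k(y)}$; for $k$ in the symmetric difference keep the surviving $\phi_k^i$. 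Then $\max_{k \in K} \phi_k \circ f_k \in \mathcal{B}_y$ dominates both inputs. Axiom (BA1) is immediate from $\phi_k(f_k(y)) = 0$ for each summand.

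The essential verification is (BA2). Fix $\psi = \max_{k \in K} \phi_k \circ f_k \in \mathcal{B}_y$, $\varepsilon > 0$, and $\omega < \infty$. For each $k \in K$, axiom (A2) in $Y_k$ applied to $\phi_k \in \mathcal{A}_{k, f_k(y)}$ yields a family $(\phi_{k, y_k})_{y_k \in Y_k} \in \Pi_{y_k \in Y_k} \mathcal{A}_{k, y_k}$ with
\begin{displaymath}
\min\{\phi_k(y_k^\prime),\omega\} \leq \phi_{k, y_k}(z_k) + \phi_{k, z_k}(y_k^\prime) + \varepsilon
\end{displaymath}
for all $y_k^\prime, z_k \in Y_k$; for $k \notin K$, pick any member of $\Pi_{y_k} \mathcal{A}_{k, y_k}$. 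Setting $\psi_{y^\prime} := \max_{k \in K} \phi_{k, f_k(y^\prime)} \circ f_k \in \mathcal{B}_{y^\prime}$, the identity $\min\{\max_k a_k, \omega\} = \max_k \min\{a_k,\omega\}$ combined with $\max_k(a_k + b_k) \leq \max_k a_k + \max_k b_k$ gives $\min\{\psi(y^\prime), \omega\} \leq \psi_y(z) + \psi_z(y^\prime) + \varepsilon$ for all $y^\prime, z \in Y$, as required. Proposition \ref{pro:ApproachGems} then supplies $\mathcal{A}_w$, and contractivity of each $f_i$ follows by taking the singleton $K = \{i\}$ in the definition of $\mathcal{B}_y$.

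For minimality, let $\mathcal{A}^\prime$ be any approach structure on $Y$ under which every $f_i$ is contractive. Then each generator $\phi \circ f_i$ of $\mathcal{B}_y$ lies in $\mathcal{A}_y^\prime$, and since functional ideals are closed under finite maxima, $\mathcal{B}_y \subset \mathcal{A}_y^\prime$ for every $y$. The defining property of a basis combined with the functional ideal closure condition of $\mathcal{A}_y^\prime$ then forces $\mathcal{A}_{w, y} \subset \mathcal{A}_y^\prime$, so $\mathcal{A}_w$ is weaker than $\mathcal{A}^\prime$.

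The universal property follows by a similar argument. The forward implication holds because compositions of contractions are contractions, which is immediate from the definition. Conversely, assume each $f_i \circ f$ is contractive on $Z = (Z,\mathcal{A}^Z)$ and fix $\phi \in \mathcal{A}_{w, f(z)}$; given $\varepsilon, \omega$, pick $\psi = \max_{k \in K} \phi_k \circ f_k \in \mathcal{B}_{f(z)}$ with $\min\{\phi, \omega\} \leq \psi + \varepsilon$. Then $\psi \circ f = \max_{k} \phi_k \circ (f_k \circ f)$ lies in $\mathcal{A}_z^Z$ by hypothesis and closure under finite maxima, and the inequality $\min\{\phi \circ f, \omega\} \leq \psi \circ f + \varepsilon$ forces $\phi \circ f \in \mathcal{A}_z^Z$ via the functional ideal axiom. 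The main bookkeeping obstacle is (BA2), whose formulation demands a single family $(\psi_{y^\prime})_{y^\prime \in Y}$ working simultaneously for all pairs $(y^\prime, z)$; conceptually the rest amounts to routine juggling of functional ideals and their bases.
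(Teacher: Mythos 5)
Your argument is correct. Note that the paper itself offers no proof of Theorem \ref{thm:StructureThmApp} --- it is imported from \cite{Lo} --- so there is nothing to compare against; what you give is the standard initial-structure construction, and all the key steps check out: the verification of (BA2) via the identity $\min\{\max_k a_k,\omega\}=\max_k\min\{a_k,\omega\}$ and subadditivity of $\max$ is exactly the point that needs care, and you handle it properly by pushing the families $(\phi_{k,\cdot})$ forward along the $f_k$ with a single finite index set $K$. Two cosmetic remarks: you should note that each $\mathcal{B}_y$ is non-empty (e.g.\ it contains the zero map, as the empty or a singleton maximum), since Proposition \ref{pro:ApproachGems} assumes this; and the final clause of the theorem asserts that the universal property \emph{characterizes} $\mathcal{A}_w$, so strictly speaking one should add the one-line uniqueness argument (apply the property to the identity maps between $(Y,\mathcal{A}_w)$ and any other structure enjoying it), whereas you only verify that $\mathcal{A}_w$ possesses the property.
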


We call the approach structure $\mathcal{A}_w$ in Theorem \ref{thm:StructureThmApp} the {\em weak approach structure for the collection of maps}
$$\left(f_i : Y \rightarrow Y_i\right)_{i \in I}.$$

Let $\lambda_i$ stand for the limit operator in the space $Y_i$ and $\lambda_w$ for the limit operator in the space $\left(Y,\mathcal{A}_w\right)$. Then

\begin{gev}\label{gev:InAppLimOp}
For a net $\left(y_\eta\right)_\eta$ in $Y$ and a point $y \in Y$,
$$\lambda_w(y_\eta \rightarrow y) = \sup_{i \in I} \lambda_i(f_i\left(y_\eta\right) \rightarrow f_i\left(y\right)).$$
In particular,
$$y_\eta \rightarrow y \textrm{ in $\left(Y,\mathcal{A}_w\right)$ }\Leftrightarrow \forall i \in I : f_i\left(y_\eta\right) \rightarrow f_i(y) \textrm{ in $Y_i$}.$$
\end{gev}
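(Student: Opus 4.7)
The plan is to exploit the explicit basis $(\mathcal{B}_y)_{y\in Y}$ for $\mathcal{A}_w$ provided by Theorem \ref{thm:StructureThmApp} together with the basis-formula for the limit operator
$$\lambda(x_\eta \to x) = \sup_{\phi \in \mathcal{B}_x} \limsup_\eta \phi(x_\eta),$$
which is recorded in the subsection on convergence. Everything else is essentially a matter of interchanging a supremum with a finite maximum and with a $\limsup$.

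First I would write out $\lambda_w(y_\eta \to y)$ as $\sup_{\psi \in \mathcal{B}_y} \limsup_\eta \psi(y_\eta)$, where each $\psi$ has the form $\max_{k \in K} \phi_k \circ f_k$ with $K \subset I$ finite and $\phi_k \in \mathcal{A}_{k,f_k(y)}$. Since $\limsup$ commutes with finite maxima and suprema commute with each other, one rewrites
$$\lambda_w(y_\eta \to y) = \sup_{\substack{K \subset I\\ \text{finite}}} \max_{k \in K} \sup_{\phi_k \in \mathcal{A}_{k,f_k(y)}} \limsup_\eta \phi_k(f_k(y_\eta)).$$
Dropping the outer $K$-supremum in favour of a supremum over single indices $i \in I$ (which is legitimate because taking finite subsets of $I$ can only increase the inner quantity by choosing the singleton $K=\{i\}$ achieving the largest value), this collapses to
$$\lambda_w(y_\eta \to y) = \sup_{i \in I} \sup_{\phi \in \mathcal{A}_{i,f_i(y)}} \limsup_\eta \phi(f_i(y_\eta)).$$
Applying the same basis-formula to each $Y_i$, but now with $\mathcal{A}_{i,f_i(y)}$ itself playing the role of the basis, identifies the inner supremum as $\lambda_i(f_i(y_\eta) \to f_i(y))$, giving the desired equality.

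For the "in particular" part, I would use that convergence in any approach space amounts to the limit operator vanishing. Hence $y_\eta \to y$ in $(Y,\mathcal{A}_w)$ iff $\lambda_w(y_\eta \to y) = 0$, and by the formula just proved this is equivalent to $\lambda_i(f_i(y_\eta) \to f_i(y)) = 0$ for every $i \in I$, i.e.\ $f_i(y_\eta) \to f_i(y)$ in $Y_i$ for every $i$.

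The only delicate point is justifying that taking the supremum over finite $K$ and the inner maximum can be replaced by a supremum over single indices; this is harmless because each $\mathcal{A}_{k,f_k(y)}$ is upwards-directed and contains the zero map (as a consequence of (A1) and the fact that functional ideals absorb smaller functions via the defining axiom), so the "singleton" choice $K=\{i\}$ already realises any desired value. Apart from this book-keeping, the argument is a direct translation between the basis description of $\mathcal{A}_w$ and the basis description of its limit operator.
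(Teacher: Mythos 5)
Your proof is correct and follows exactly the route the paper intends for this corollary (which it states without proof): combine the explicit basis $\mathcal{B}_y$ from Theorem \ref{thm:StructureThmApp} with the basis formula $\lambda(x_\eta \rightarrow x) = \sup_{\phi \in \mathcal{B}_x} \limsup_\eta \phi(x_\eta)$, interchange the $\limsup$ with the finite maximum, and collapse the supremum over finite subsets of $I$ to a supremum over singletons. The bookkeeping steps you flag (finite max commuting with $\limsup$ over a directed set, and $\mathcal{A}_{i,f_i(y)}$ serving as a basis for itself) are all valid.
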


It is important to notice that the weak approach structure for a collection of maps of a set into metrizable approach spaces in general fails to be metrizable. 

\section{The Wasserstein metric and approach theory}\label{sec:WMAP}

Let $S = (S,d)$ be a separable and complete metric space, $\mathcal{C}_b(S,\mathbb{R})$ the collection of all maps $f$ of $S$ into $\mathbb{R}$ which are bounded and continuous, and $\mathcal{P}(S)$ the set of all Borel probability measures on $S$. 

Recall that a net $(P_\eta)_\eta$ in $\mathcal{P}(S)$ is said to {\em converge weakly} to $P \in \mathcal{P}(S)$ iff  
$$\forall f \in \mathcal{C}_b(S,\mathbb{R}) : \int_S f dP_\eta \rightarrow\int_S f dP.$$
We write $P_\eta \stackrel{w}{\rightarrow} P$ to indicate that $(P_\eta)_\eta$ converges weakly to $P$. It is well known that weak convergence corresponds to convergence with respect to the {\em weak topology} on $S$ (\cite{B99},\cite{P67}). The latter is separable and completely metrizable.

Furthermore, let $\kappa(S,\mathbb{R})$ be the collection of all maps $f$ of $S$ into $\mathbb{R}$ which are contractive, i.e. for which the inequality
$$\left|f(x) - f(y)\right| \leq d(x,y)$$ 
holds for all $x, y \in S$, and let $\mathcal{P}^1(S)$ be the set of all $P \in \mathcal{P}(S)$ with a finite first moment, i.e. for which
$$\int_S d(a,\cdot) dP < \infty$$ 
for a certain (and thus, by the triangle inequality, for all) $a \in S$, where $d(a,\cdot)$ stands for the map 
$$d(a,\cdot) : S \rightarrow \mathbb{R}^+ : x \mapsto d(a,x).$$
Note that for $f \in \kappa(S,\mathbb{R})$, $P \in \mathcal{P}^1(S)$, and $a \in S$, 
$$\int_S \left|f\right| dP \leq \left|f(a)\right| + \int_S d(a,\cdot) dx < \infty,$$
from which we infer that $\int_S f dP$ is well-defined.

The {\em Wasserstein metric} on $\mathcal{P}^1(S)$, see e.g. \cite{V03}, is defined by the formula
\begin{equation*}
W(P,Q) = \myinf_{\pi} \int_{S \times S} d(x,y) d\pi(x,y),
\end{equation*} 
where the infimum runs over all Borel probability measures $\pi$ on $S \times S$ with first marginal $P$ and second marginal $Q$. By Kantorovich duality theory (\cite{V03},\cite{E10},\cite{E11}), the alternative formula
\begin{equation}
W(P,Q) = \sup_{f \in \kappa(S,\mathbb{R})} \left|\int_S f dP - \int_S f dQ\right|\label{eq:defWass}
\end{equation}
holds. Furthermore, if $S = \mathbb{R}$, then
\begin{equation*}
W(P,Q) = \int_{-\infty}^\infty \left|F_P(x) - F_Q(x)\right| dx
\end{equation*}
with $F_P$ (respectively $F_Q$) the cumulative distribution function of $P$ (respectively $Q$).

The topology underlying the Wasserstein metric is stronger than the weak topology. More precisely, for a net $(P_\eta)_\eta$ in $\mathcal{P}^1(S)$ and $P \in \mathcal{P}^1(S)$, the following are equivalent (\cite{V03}):
\begin{enumerate}
\item $W(P,P_\eta) \rightarrow 0,$
\item $P_\eta \stackrel{w}{\rightarrow} P$ and $\forall a \in S : \int_S d(a,\cdot) dP_\eta \rightarrow \int_S d(a,\cdot) dP$.
\end{enumerate}
Also, the Wasserstein metric is separable and complete, see e.g. \cite{B08}.

\begin{pro}
Let $\mathcal{A}_W$ be the underlying approach structure of the Wasserstein metric $W$. Then, in the approach space $(\mathcal{P}^1(S),\mathcal{A}_W)$, the limit operator of a net $(P_\eta)_\eta$ at a point $P$ is given by
$$\lambda_W(P_\eta \rightarrow P) = \limsup_{\eta} \sup_{f \in \kappa(S,\mathbb{R})}  \left|\int f dP - \int f dP_\eta\right|.$$
\end{pro}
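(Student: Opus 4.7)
The plan is to combine two facts already recorded in the paper: the general formula (\ref{eq:LimOpM}) expressing the limit operator of the approach structure underlying a metric, and the Kantorovich duality formula (\ref{eq:defWass}) for the Wasserstein metric.

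First I would invoke the fact that $\mathcal{A}_W$ is, by assumption, the approach structure $\mathcal{A}_m$ underlying the metric $m = W$ on $\mathcal{P}^1(S)$. Applying (\ref{eq:LimOpM}) directly to this situation yields
\begin{equation*}
\lambda_W(P_\eta \to P) = \limsup_\eta W(P, P_\eta).
\end{equation*}
Thus the whole statement reduces to replacing $W(P,P_\eta)$ by its Kantorovich dual expression.

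Next I would substitute the Kantorovich duality formula (\ref{eq:defWass}), which asserts
\begin{equation*}
W(P, P_\eta) = \sup_{f \in \kappa(S,\mathbb{R})} \left|\int_S f\, dP - \int_S f\, dP_\eta\right|,
\end{equation*}
into the previous identity. Taking the limit superior over $\eta$ on both sides gives exactly the claimed formula for $\lambda_W(P_\eta \to P)$.

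There is no genuine obstacle here: the statement is a direct composition of two results quoted earlier in the paper (the explicit form of the limit operator associated with a metric approach structure, and the duality formula for $W$). No integrability issue arises because each integral $\int_S f\, dP$ and $\int_S f\, dP_\eta$ for $f \in \kappa(S,\mathbb{R})$ is already shown in the preceding discussion to be well-defined on $\mathcal{P}^1(S)$, so the supremum inside the limit superior is unambiguous.
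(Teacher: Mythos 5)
Your proposal is correct and is exactly the paper's argument: the paper's proof is a one-line appeal to (\ref{eq:LimOpM}) and (\ref{eq:defWass}), which you have simply spelled out. Nothing further is needed.
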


\begin{proof}
This follows from (\ref{eq:LimOpM}) and (\ref{eq:defWass}) .
\end{proof}

Inspired by formula (\ref{eq:defWass}), we introduce the {\em contractive approach structure} $\mathcal{A}_\kappa$ on $\mathcal{P}^1(S)$ as the weak approach structure for the collection of maps
$$\left(\mathcal{P}^1(S) \rightarrow \mathbb{R} : P \mapsto \int f dP\right)_{f \in \kappa(S,\mathbb{R})},$$
where, of course, the approach structure underlying the Euclidean metric is considered on $\mathbb{R}$. 

\begin{pro}\label{pro:limopkappa}
In the approach space $(\mathcal{P}^1(S),\mathcal{A}_\kappa)$, the limit operator of a net $(P_\eta)_\eta$ at a point $P$ is given by
$$\lambda_\kappa(P_\eta \rightarrow P) = \sup_{f \in \kappa(S,\mathbb{R})} \limsup_{\eta} \left|\int f dP - \int f dP_\eta\right|.$$
\end{pro}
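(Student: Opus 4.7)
The plan is to recognize this as a direct application of Corollary \ref{gev:InAppLimOp} (the formula for the limit operator of a weak approach structure) combined with formula (\ref{eq:LimOpM}) (the limit operator of a metrizable approach structure is a $\limsup$ of the metric distances). Since $\mathcal{A}_\kappa$ is defined precisely as the weak approach structure for the initial source
$$\left(I_f : \mathcal{P}^1(S) \rightarrow \mathbb{R} : P \mapsto \int f \, dP\right)_{f \in \kappa(S,\mathbb{R})},$$
where $\mathbb{R}$ carries the approach structure $\mathcal{A}_{|\cdot|}$ underlying the Euclidean metric, Corollary \ref{gev:InAppLimOp} immediately gives
$$\lambda_\kappa(P_\eta \rightarrow P) = \sup_{f \in \kappa(S,\mathbb{R})} \lambda_{|\cdot|}\!\left(I_f(P_\eta) \rightarrow I_f(P)\right).$$

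Next I would apply formula (\ref{eq:LimOpM}) to each term inside the supremum. Since the Euclidean approach structure on $\mathbb{R}$ is $\mathcal{A}_{|\cdot|}$, we have
$$\lambda_{|\cdot|}\!\left(I_f(P_\eta) \rightarrow I_f(P)\right) = \limsup_\eta \left|I_f(P) - I_f(P_\eta)\right| = \limsup_\eta \left|\int f \, dP - \int f \, dP_\eta\right|.$$
Substituting this back into the previous display yields the claimed formula.

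Before concluding, I would make one small sanity check: the definition of $\mathcal{A}_\kappa$ as a weak approach structure tacitly presupposes that each $I_f$ lands in a well-defined real number. This is already noted in the excerpt: for $f \in \kappa(S,\mathbb{R})$ and $P \in \mathcal{P}^1(S)$, the integral $\int f \, dP$ exists because $|f| \leq |f(a)| + d(a,\cdot)$ and the right-hand side is $P$-integrable. So each $I_f$ is a bona fide map $\mathcal{P}^1(S) \to \mathbb{R}$ and the weak approach structure is meaningful. There is no real obstacle here; the proof is essentially a two-line invocation of Corollary \ref{gev:InAppLimOp} and equation (\ref{eq:LimOpM}), and the only thing worth emphasizing in the write-up is the bookkeeping that identifies the index set $I$ with $\kappa(S,\mathbb{R})$ and the target approach spaces $Y_i$ with $(\mathbb{R},\mathcal{A}_{|\cdot|})$.
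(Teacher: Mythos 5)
Your proposal is correct and is essentially the paper's own proof: the paper likewise derives the formula directly from the definition of $\mathcal{A}_\kappa$ as the weak approach structure for the maps $P \mapsto \int f\, dP$ together with Corollary \ref{gev:InAppLimOp}, with equation (\ref{eq:LimOpM}) supplying the Euclidean limit operator. Your added remark on the well-definedness of each $I_f$ is already noted in the paper and is harmless extra bookkeeping.
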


\begin{proof}
This follows from the definition of $\mathcal{A}_\kappa$ and Corollary \ref{gev:InAppLimOp}.
\end{proof}

The main goal of this section is to establish the following result.

\begin{thm}\label{thm:AkW}
The Wasserstein metric metrizes the contractive approach structure, which, by Theorem \ref{thm:characweak}, is equivalent with the assertion that, for each net $(P_\eta)_\eta$ in $\mathcal{P}^1(S)$ and each $P \in \mathcal{P}^1(S)$,
$$\lambda_\kappa(P_\eta \rightarrow P) = \lambda_{W}(P_\eta \rightarrow P),$$
or, more explicitly,
\begin{equation*}
\sup_{f \in \kappa(S,\mathbb{R})} \limsup_{\eta} \left|\int f dP - \int f dP_\eta\right| = \limsup_{\eta} \sup_{f \in \kappa(S,\mathbb{R})} \left|\int f dP - \int f dP_\eta\right|.
\end{equation*}
\end{thm}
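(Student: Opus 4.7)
The inequality $\lambda_\kappa \leq \lambda_W$ is immediate from the universal estimate $\sup \limsup \leq \limsup \sup$ applied to the Kantorovich formula (\ref{eq:defWass}). For the reverse direction I would argue by contradiction: suppose $\lambda_W(P_\eta \to P) > c > \alpha := \lambda_\kappa(P_\eta \to P)$ for some net $(P_\eta)$ in $\mathcal{P}^1(S)$ and some $P \in \mathcal{P}^1(S)$. Replacing by a cofinal subnet, I may assume $W(P, P_\eta) > c$ for every $\eta$. Fix $s_0 \in S$; using Kantorovich duality (\ref{eq:defWass}) together with the stability of $\kappa(S,\mathbb{R})$ under negation and under the addition of real constants, I pick $f_\eta \in \kappa(S,\mathbb{R})$ with $f_\eta(s_0) = 0$ and $\int f_\eta\, d(P - P_\eta) > c$; the normalisation yields $|f_\eta| \leq d(s_0, \cdot)$.

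Next I extract a pointwise limit of $(f_\eta)$. The space $\kappa_0 := \{g \in \kappa(S,\mathbb{R}) : g(s_0) = 0\}$ embeds as a closed subset of the compact Tychonoff product $\prod_{x \in S}[-d(s_0,x), d(s_0,x)]$, hence is compact in the pointwise topology, and Ascoli's theorem upgrades this to uniform convergence on each compact $K \subset S$ by equicontinuity. Pass to a sub-subnet, still denoted $(f_\eta)$, with $f_\eta \to f \in \kappa_0$ in this sense. Since $|f_\eta - f| \leq 2 d(s_0, \cdot) \in L^1(P)$ and $P \in \mathcal{P}^1(S)$, dominated convergence gives $\int f_\eta\, dP \to \int f\, dP$.

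To close the contradiction, I must compare $\int f_\eta\, dP_\eta$ with $\int f\, dP_\eta$ so as to contradict $\limsup_\eta |\int f\, d(P - P_\eta)| \leq \alpha$ (which holds because $f \in \kappa$). Here I bring in Dini's theorem together with the last stability property of $\kappa$: closure under pointwise suprema and infima of families that are bounded at a single point. The envelopes $\tilde{G}_{\eta_0} := \sup_{\eta \geq \eta_0} f_\eta$ and $G_{\eta_0} := \inf_{\eta \geq \eta_0} f_\eta$ lie in $\kappa_0$, sandwich $f_\eta$ between them whenever $\eta \geq \eta_0$, and decrease resp.\ increase monotonically to $f$ as $\eta_0$ advances. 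Dini makes these monotone limits uniform on each compact subset of $S$. Feeding the envelope bounds into $\int f_\eta\, d(P - P_\eta) > c$, combining with dominated convergence against $P$ and the fixed-function limsup bounds against $P_\eta$ applied to $g \in \{f, \tilde{G}_{\eta_0}, G_{\eta_0}\} \subset \kappa$, one forces $\int f\, d(P - P_\eta) > \alpha$ cofinally, the desired contradiction. The chief obstacle I anticipate lies precisely here: the quantities $\int (f_\eta - f)\, dP_\eta$ involve the varying integrand $g_\eta = f_\eta - f$ and so cannot be bounded directly by $\alpha$; the Dini envelope, supplied by the stability of $\kappa$, is the device that converts the pointwise convergence $f_\eta \to f$ into integrable control against the moving measures $P_\eta$, and making this bookkeeping tight enough to avoid a multiplicative loss is where the real work lies.
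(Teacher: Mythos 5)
Your strategy is essentially the paper's: normalise the near-optimal contractions at a base point so that $|f_\eta|\leq d(s_0,\cdot)$, extract a subnet converging uniformly on compacta via Ascoli, form the sup/inf envelopes (which remain contractions vanishing at $s_0$ by the stability of $\kappa$ under pointwise suprema and infima), invoke Dini, and compare the \emph{fixed} envelope against the moving measures $P_\eta$ through $\lambda_\kappa$; your contradiction framing versus the paper's direct $\varepsilon$-estimate is cosmetic, and your closing inequality chain is exactly the paper's (\ref{eq:startbigineq})--(\ref{eq:AlmostDone}).

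The one step you must not leave as stated is the appeal to ``dominated convergence'' for the net $(f_\eta)$ and, implicitly, for the monotone net of envelopes: for \emph{nets}, pointwise convergence dominated by an integrable function does not imply convergence of integrals (the index set need not be countable, and the usual proof via Fatou fails). This matters precisely where you need $\int(\tilde G_{\eta_0}-G_{\eta_0})\,dP\to 0$. The repair is the paper's equation (\ref{eq:outsideK}): since $P$ is a single tight measure in $\mathcal{P}^1(S)$, choose a compact $K\subset S$ with $\int_{S\setminus K}d(s_0,\cdot)\,dP<\varepsilon$; on $K$, Dini gives $\sup_K(\tilde G_{\eta_0}-G_{\eta_0})<\varepsilon$ for $\eta_0$ large, while off $K$ the envelope difference is dominated by $2d(s_0,\cdot)$, so its tail integral is below $2\varepsilon$. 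With that substitution your bookkeeping
\begin{equation*}
c \;\leq\; \int\bigl(\tilde G_{\eta_0}-G_{\eta_0}\bigr)\,dP \;+\; \limsup_{\eta}\Bigl(\int G_{\eta_0}\,dP-\int G_{\eta_0}\,dP_\eta\Bigr) \;\leq\; 3\varepsilon+\alpha
\end{equation*}
closes without multiplicative loss, yielding the contradiction $c\leq\alpha$. (A second, minor, point: when you pass to the cofinal subnet on which $W(P,P_\eta)>c$ and then to the Ascoli sub-subnet, you should note that $\lambda_\kappa$ can only decrease along subnets, so the hypothesis $\lambda_\kappa\leq\alpha$ survives the extractions.)
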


For the proof of Theorem \ref{thm:AkW}, the authors were inspired by \cite{E10} and \cite{E11}, in particular Section 3 in \cite{E10}. We first give four lemmas.

Fix $a \in S$ and let $\kappa_a(S,\mathbb{R})$ be the set of those $f \in \kappa(S,\mathbb{R})$ for which $f(a) = 0$.

\begin{lem}\label{lem:AbsBecomesNormal}
For $P,Q \in \mathcal{P}^1(S)$,
\begin{equation}
W(P,Q) = \sup_{f \in \kappa_a(S,\mathbb{R})} \left(\int f dP - \int f dQ\right),\label{eq:NewFormW}
\end{equation}
and, for a net $(P_\eta)_\eta$ in $\mathcal{P}^1(S)$ and $P \in \mathcal{P}^1(S)$,
\begin{equation}
\lambda_\kappa(P_\eta \rightarrow P) = \sup_{f \in \kappa_a(S,\mathbb{R})} \limsup_{\eta} \left(\int f dP_\eta - \int f dP\right).\label{eq:NewFormLambdaKappa}
\end{equation}
\end{lem}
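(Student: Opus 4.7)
The strategy rests on two simple symmetries: (i) $\kappa(S,\mathbb{R})$ is stable under negation (i.e.\ $f \in \kappa(S,\mathbb{R})$ iff $-f \in \kappa(S,\mathbb{R})$), which lets us remove the absolute values in (\ref{eq:defWass}) and in Proposition \ref{pro:limopkappa}; and (ii) since $P$, $Q$, and each $P_\eta$ are probability measures, the integral difference $\int f\,dP - \int f\,dQ$ is invariant under the shift $f \mapsto f - f(a)$, which moves us from $\kappa(S,\mathbb{R})$ into $\kappa_a(S,\mathbb{R})$.

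For the Wasserstein identity (\ref{eq:NewFormW}), I would start from the Kantorovich formula (\ref{eq:defWass}) and rewrite the absolute value as a maximum,
\[
\Bigl|\int f\,dP - \int f\,dQ\Bigr| = \max\Bigl\{\int f\,dP - \int f\,dQ,\ \int(-f)\,dP - \int(-f)\,dQ\Bigr\}.
\]
Interchanging $\sup_{f \in \kappa(S,\mathbb{R})}$ with the maximum identifies the two resulting suprema through $f \mapsto -f$, so $W(P,Q) = \sup_{f \in \kappa(S,\mathbb{R})}(\int f\,dP - \int f\,dQ)$. Applying the shift $f \mapsto f - f(a)$ then replaces $\kappa(S,\mathbb{R})$ by $\kappa_a(S,\mathbb{R})$ without altering the value (the constant $f(a)$ cancels), yielding (\ref{eq:NewFormW}).

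For the limit-operator identity (\ref{eq:NewFormLambdaKappa}), the plan is parallel but with an outer $\limsup_\eta$ inserted. The only extra ingredient required is the elementary net-theoretic equality
\[
\limsup_\eta \max\{a_\eta, b_\eta\} = \max\{\limsup_\eta a_\eta,\ \limsup_\eta b_\eta\},
\]
which follows on writing $\limsup$ as $\inf_{\eta_0}\sup_{\eta \geq \eta_0}$ and noting that for two decreasing nets the infimum of the maximum equals the maximum of the infima. Applied to $a_\eta = \int f\,dP - \int f\,dP_\eta$ and $b_\eta = -a_\eta$, this permits the same sup--max interchange inside Proposition \ref{pro:limopkappa}; the $f \mapsto -f$ symmetry collapses the result to $\sup_{f \in \kappa(S,\mathbb{R})} \limsup_\eta(\int f\,dP_\eta - \int f\,dP)$, and the shift $f \mapsto f - f(a)$ finishes the argument.

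There is no serious obstacle to overcome: both halves are clean applications of the two symmetries, with the $\limsup$--max identity being the only auxiliary fact not already in the text. The one point that requires a bit of care is keeping the signs straight when removing the absolute value and ensuring the sup--max interchange is applied to both terms in order to exploit the $f \mapsto -f$ symmetry correctly.
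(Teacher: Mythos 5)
Your proof is correct and follows essentially the same route as the paper's: both rest on the shift $f \mapsto f - f(a)$ leaving the integral difference unchanged and on the negation symmetry of $\kappa_a(S,\mathbb{R})$ to drop the absolute values, merely applied in the opposite order. Your explicit verification of the identity $\limsup_\eta \max\{a_\eta, -a_\eta\} = \max\{\limsup_\eta a_\eta, \limsup_\eta(-a_\eta)\}$ for nets is a detail the paper leaves implicit, and it is handled correctly.
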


\begin{proof}
For $g \in \kappa(S,\mathbb{R})$, put 
$$g_a = g - g(a).$$ 
Then $g_a \in \kappa_a(S,\mathbb{R})$ and 
$$\int g dP - \int g dQ = \int g_a dP - \int g_a dQ.$$
Thus, from (\ref{eq:defWass}) we now learn that
$$W(P,Q) = \sup_{f \in \kappa_a(S,\mathbb{R})} \left|\int f dP - \int f dQ\right|.$$
But then (\ref{eq:NewFormW}) follows from the fact that, for any $f : S \rightarrow \mathbb{R}$, $f \in \kappa_a(S,\mathbb{R})$ if and only if $-f \in \kappa_a(S,\mathbb{R})$. By the same observations, (\ref{eq:NewFormLambdaKappa}) is established.
\end{proof}

\begin{lem}\label{lem:seqsup}
For each $\varepsilon > 0$, there exists a net  $(f_\eta)_{\eta \in D}$ in $\kappa_a(S,\mathbb{R})$, a directed set $D'$, and  a monotonically increasing and cofinal map $h :  D' \rightarrow D$ such that

\begin{equation*}
\lambda_{W}(P_\eta \rightarrow P) -  \varepsilon  \leq \lim_{\eta^\prime} \left(\int f_{h(\eta')} dP - \int f_{h(\eta')} dP_{h(\eta')}\right) \leq \lambda_{W}(P_\eta \rightarrow P).
\end{equation*}

\end{lem}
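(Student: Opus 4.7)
The strategy is to exploit the symmetrized Kantorovich formula from Lemma \ref{lem:AbsBecomesNormal}, namely $W(P,P_\eta) = \sup_{f \in \kappa_a(S,\mathbb{R})} \bigl(\int f\,dP - \int f\,dP_\eta\bigr)$: greedily choose a near-maximizer $f_\eta \in \kappa_a(S,\mathbb{R})$ for each $\eta$, then pass to a subnet along which the $\limsup$ is realized as a genuine limit.

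First, for every $\eta \in D$, use the supremum formula to pick $f_\eta \in \kappa_a(S,\mathbb{R})$ satisfying
$$\int f_\eta\,dP - \int f_\eta\,dP_\eta \;\geq\; W(P,P_\eta) - \varepsilon.$$
Since $P, P_\eta \in \mathcal{P}^1(S)$ the quantity $W(P,P_\eta)$ is finite, so this choice is meaningful (the case $\lambda_W = \infty$ is handled identically, only the upper bound being vacuous). Put $a_\eta = \int f_\eta\,dP - \int f_\eta\,dP_\eta$. Because $f_\eta \in \kappa_a(S,\mathbb{R}) \subset \kappa(S,\mathbb{R})$, formula (\ref{eq:defWass}) yields $a_\eta \leq W(P,P_\eta)$; combined with the choice giving $a_\eta \geq W(P,P_\eta) - \varepsilon$ and the description $\lambda_W(P_\eta \to P) = \limsup_\eta W(P,P_\eta)$ coming from (\ref{eq:LimOpM}), we obtain
$$\lambda_W(P_\eta \to P) - \varepsilon \;\leq\; \limsup_\eta a_\eta \;\leq\; \lambda_W(P_\eta \to P).$$

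Second, extract a monotonically increasing cofinal $h : D' \to D$ with $\lim_{\eta'} a_{h(\eta')} = \limsup_\eta a_\eta$. Set $L = \limsup_\eta a_\eta$. By definition of limsup, for each $n \in \mathbb{N}$ there is $\eta_n \in D$ with $a_\eta \leq L + 1/n$ for all $\eta \geq \eta_n$; using directedness of $D$, we may arrange $(\eta_n)_n$ to be monotone. Likewise, for each $n$ the set $\{\eta \in D : a_\eta > L - 1/n\}$ is cofinal. Put
$$D' = \bigl\{(\eta,n) \in D \times \mathbb{N} : \eta \geq \eta_n \text{ and } a_\eta > L - 1/n\bigr\},$$
equipped with the product ordering, and define $h(\eta,n) = \eta$. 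Directedness of $D'$ is obtained by combining directedness of $D$ with the cofinality property above; $h$ is monotone and cofinal by construction; and for any $(\eta,n) \in D'$ with $(\eta,n) \geq (\eta_N, N)$ for a suitably chosen starting pair in $D'$ with second coordinate $N$, the inclusions $\eta \geq \eta_n$ and $n \geq N$ force $L - 1/N < a_{h(\eta,n)} \leq L + 1/N$, so $\lim_{\eta'} a_{h(\eta')} = L$.

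The main obstacle is the subnet construction of the second step: one must ensure that $h$ is \emph{monotone} and not merely cofinal, which compels us to realize $D'$ as an ordered subset of $D \times \mathbb{N}$ and to arrange the threshold indices $\eta_n$ coherently so that the product order of $D \times \mathbb{N}$ is compatible with the bookkeeping for both $\limsup \leq L + 1/n$ and the cofinal choice $a_\eta > L - 1/n$. Once the symmetrized Kantorovich formula is in hand, the greedy step and the two bounds on $\limsup a_\eta$ are routine.
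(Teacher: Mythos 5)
Your proposal is correct and follows the same route as the paper: use the symmetrized formula of Lemma \ref{lem:AbsBecomesNormal} to greedily pick near-maximizers $f_\eta$, sandwich $\limsup_\eta\bigl(\int f_\eta\,dP-\int f_\eta\,dP_\eta\bigr)$ between $\lambda_W(P_\eta\to P)-\varepsilon$ and $\lambda_W(P_\eta\to P)$, and pass to a subnet realizing the $\limsup$ as a limit. The only difference is that the paper simply invokes the fact that the $\limsup$ of a net is its largest accumulation point, whereas you carry out the monotone cofinal subnet construction explicitly (and correctly).
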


\begin{proof}
By Lemma \ref{lem:AbsBecomesNormal}, choose, for each $\eta \in D$, $f_\eta \in \kappa_a(S,\mathbb{R})$ such that 
$$W(P,P_\eta) - \varepsilon \leq  \int f_\eta dP - \int f_\eta dP_\eta \leq W(P,P_\eta) .$$
Then 
$$\lambda_{W}(P_\eta \rightarrow P) - \varepsilon \leq  \limsup_{\eta} \left(\int f_\eta dP - \int f_\eta dP_\eta \right) \leq \lambda_{W}(P_\eta \rightarrow P).$$
Since $\limsup_{\eta} \left(\int f_\eta dP - \int f_\eta dP_\eta \right)$ is the largest accumulation point of the net $\left(\int f_\eta dP - \int f_\eta dP_\eta \right)_{\eta \in D}$, we can find $D^\prime$ and $h$ with the desired properties.
\end{proof}

The following lemma is well-known, but we include a proof for the sake of completeness.

\begin{lem}\label{lem:supcontractive}
Let $(f_i)_{i \in I}$ be a collection of contractions of $S$ into $\mathbb{R}$ such that for each $x \in S$ the set $\left\{f_{i}(x) \mid i \in I\right\}$ is bounded. Then the maps $\sup_{i \in I} f_i$ and $\myinf_{i \in I} f_i$ are also contractions. 
\end{lem}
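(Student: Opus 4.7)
The plan is to handle the supremum case by a direct two-step argument and then derive the infimum case by the identity $\inf_i f_i = -\sup_i (-f_i)$. Boundedness of $\{f_i(x) \mid i \in I\}$ for each $x \in S$ will be used only to ensure that the functions $\sup_i f_i$ and $\inf_i f_i$ are real-valued, so that subtracting their values at two points is well-defined.

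First, I would fix arbitrary $x, y \in S$ and observe that for every index $i \in I$ the contraction hypothesis gives $f_i(x) \leq f_i(y) + d(x,y)$. Bounding the right-hand side by $\sup_{j \in I} f_j(y) + d(x,y)$ and then taking the supremum over $i$ on the left yields
\begin{equation*}
\sup_{i \in I} f_i(x) \leq \sup_{j \in I} f_j(y) + d(x,y).
\end{equation*}
Interchanging the roles of $x$ and $y$ produces the reverse inequality, and combining the two gives $\lvert \sup_i f_i(x) - \sup_i f_i(y)\rvert \leq d(x,y)$, which is precisely the contractivity of $\sup_{i \in I} f_i$.

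For the infimum, I would note that the family $(-f_i)_{i \in I}$ consists of contractions (since $|{-f_i(x)} - ({-f_i(y)})| = |f_i(x) - f_i(y)| \leq d(x,y)$) and still has $\{-f_i(x) \mid i \in I\}$ bounded for each $x$. Applying the supremum statement to this family and using $\inf_{i \in I} f_i = -\sup_{i \in I}(-f_i)$ gives that $\inf_{i \in I} f_i$ is also a contraction.

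There is no real obstacle here; the only delicate point is keeping the pointwise boundedness hypothesis in mind so that all suprema and infima are finite real numbers rather than $\pm\infty$, which is what allows the inequalities above to be rearranged into the Lipschitz estimate.
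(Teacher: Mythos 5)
Your argument for the supremum is exactly the paper's: from $f_i(x) \leq f_i(y) + d(x,y)$ pass to $\sup_i f_i(x) \leq \sup_i f_i(y) + d(x,y)$, symmetrize in $x$ and $y$, and use pointwise boundedness to guarantee finiteness. The only cosmetic difference is that you deduce the infimum case via $\inf_i f_i = -\sup_i(-f_i)$ where the paper simply says the analogous argument works; both are correct and essentially identical.
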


\begin{proof}
Fix $x, y \in S$. For each $i \in I$, $f_i$ being a contraction,
$$f_i(x) \leq f_i(y) + d(x,y),$$
whence
$$\sup_{i \in I} f_i(x) \leq \sup_{i \in I} f_i(y) + d(x,y),$$
and, reversing the role of $x$ and $y$, also
$$\sup_{i \in I} f_i(y) \leq \sup_{i \in I} f_i(x) + d(x,y),$$
from which we conclude dat $\sup_{i \in I} f_i$ is a contraction. 

Analogously, one proves that $\myinf_{i \in I} f_i$ is a contraction.

\end{proof}

\begin{lem}\label{lem:DiniApp}
Let $K \subset S$ be compact, $(g_\eta)_{\eta}$ a net of contractions of $K$ into $S$, and $g$ a contraction of $K$ into $S$.  Suppose that 
$$\mylim_{\eta} \sup_{x \in K} \left|g(x) - g_\eta(x)\right| = 0.$$
Then the net $(g_\eta)_{\eta}$ is uniformly bounded on $K$, and, for each $\eta$, the maps $\sup_{\zeta \succeq \eta} g_\zeta$ and $\myinf_{\zeta \succeq \eta} g_\zeta$ are contractions. Furthermore,
\begin{equation*}
\mylim_{\eta} \sup_{x \in K} \left|\left(\sup_{\zeta \succeq \eta} g_\zeta\right)(x) - \left(\myinf_{\zeta \succeq \eta} g_\zeta \right)(x)\right| = 0.
\end{equation*}
\end{lem}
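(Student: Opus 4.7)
The hypothesis already furnishes uniform convergence on the compact set $K$, so all three conclusions should flow from elementary estimates together with Lemma~\ref{lem:supcontractive}. I would proceed in the following order.

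\emph{Boundedness and well-definedness.} Since $g$ is a contraction on the compact set $K$, picking any $a \in K$ gives $|g(x)| \le |g(a)| + \mathrm{diam}(K)$, so $M := \sup_{x \in K}|g(x)| < \infty$. The hypothesis then furnishes some $\eta_0$ with $\sup_{x \in K}|g_\zeta(x) - g(x)| \le 1$ for all $\zeta \succeq \eta_0$, whence $\sup_{x \in K}|g_\zeta(x)| \le M+1$ for such $\zeta$. This gives the (eventual) uniform boundedness claimed in the lemma. In particular, for every $\eta \succeq \eta_0$ the family $\{g_\zeta : \zeta \succeq \eta\}$ is pointwise bounded on $K$, so Lemma~\ref{lem:supcontractive} applies and shows that both $\sup_{\zeta \succeq \eta} g_\zeta$ and $\myinf_{\zeta \succeq \eta} g_\zeta$ are contractions on $K$.

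\emph{The Dini-type estimate.} Fix $\varepsilon > 0$ and choose $\eta_0$ so that $\sup_{x \in K}|g(x) - g_\zeta(x)| < \varepsilon/2$ for all $\zeta \succeq \eta_0$. Then for every $\eta \succeq \eta_0$ and every $x \in K$, the two-sided bound
\[
g(x) - \tfrac{\varepsilon}{2} \le g_\zeta(x) \le g(x) + \tfrac{\varepsilon}{2} \qquad (\zeta \succeq \eta)
\]
passes to the supremum and infimum in $\zeta$, yielding
\[
g(x) - \tfrac{\varepsilon}{2} \le \myinf_{\zeta \succeq \eta} g_\zeta(x) \le \sup_{\zeta \succeq \eta} g_\zeta(x) \le g(x) + \tfrac{\varepsilon}{2}.
\]
Hence $\sup_{x \in K}\bigl[\bigl(\sup_{\zeta \succeq \eta} g_\zeta\bigr)(x) - \bigl(\myinf_{\zeta \succeq \eta} g_\zeta\bigr)(x)\bigr] \le \varepsilon$ for every $\eta \succeq \eta_0$. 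Since the quantity under this supremum is non-negative and monotonically non-increasing in $\eta$, and $\varepsilon > 0$ is arbitrary, the claimed limit equals $0$.

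\emph{Main obstacle.} There is essentially no hard step once the bookkeeping with the net index is handled. The one mild subtlety, specific to nets rather than sequences, is that "uniform boundedness of the entire net" is really only attained past a cofinal tail; I would read the boundedness clause of the lemma in this sense, which is in any case all that is needed downstream since the final conclusion is a limit as $\eta$ ranges over a cofinal subset of the directed index set.
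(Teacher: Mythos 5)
Your proof is correct, but it takes a genuinely different and more elementary route than the paper. The paper only extracts \emph{pointwise} convergence $g(x)=\lim_\eta g_\eta(x)$ from the hypothesis, observes that $\bigl(\sup_{\zeta\succeq\eta}g_\zeta\bigr)_\eta$ decreases to $g$ and $\bigl(\myinf_{\zeta\succeq\eta}g_\zeta\bigr)_\eta$ increases to $g$, so that their difference is a monotone net of continuous functions decreasing pointwise to $0$ on the compact set $K$, and then invokes Dini's Theorem to upgrade this to uniform convergence. You instead exploit the full strength of the hypothesis (which already \emph{is} uniform convergence) via the $\varepsilon/2$ sandwich $g-\varepsilon/2\le g_\zeta\le g+\varepsilon/2$ for $\zeta\succeq\eta_0$, which passes to $\sup$ and $\myinf$ over $\zeta\succeq\eta$ for any $\eta\succeq\eta_0$ and gives the conclusion directly, with no appeal to Dini. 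What the paper's route buys is that the argument would survive weakening the hypothesis to mere pointwise convergence (with $g$ continuous); what yours buys is self-containedness and the elimination of an external theorem. You are also right to flag the boundedness subtlety: for a general net, uniform (and even pointwise) boundedness of the family $\{g_\zeta:\zeta\succeq\eta\}$ is only guaranteed for $\eta$ in a tail $\eta\succeq\eta_0$, and the same caveat silently affects the paper's claim that $\sup_{\zeta\succeq\eta}g_\zeta$ and $\myinf_{\zeta\succeq\eta}g_\zeta$ are (finite, hence) contractions for \emph{each} $\eta$; your tail reading is the correct one and suffices for every downstream use in the proof of Theorem \ref{thm:AkW}.
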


\begin{proof}
By Lemma \ref{lem:supcontractive}, for each $ \eta$, the maps $\sup_{\zeta \succeq \eta} g_\zeta$ and $\myinf_{\zeta \succeq \eta} g_\zeta$ are contractions. Furthermore, for each $x \in K$, $g(x) = \lim_{\eta}g_\eta(x),$ whence $$g(x) = \limsup_\eta g_\eta(x) = \liminf_\eta g_\eta(x). $$
In particular, the net $\left(\sup_{\zeta \succeq \eta} g_\zeta\right)_\eta$ is monotincally decreasing to $g$, and  the net $\left(\myinf_{\zeta \succeq \eta} g_\zeta\right)_\eta$ is monotonically increasing to $g$. Therefore, the net $\left(\sup_{\zeta \succeq \eta} g_\zeta - \myinf_{\zeta \succeq \eta} g_\zeta\right)_\eta$ is monotonically decreasing to $0$. Now the proof is finished by Dini's Theorem (\cite{K75},\cite{KR01}).
\end{proof}

We now provide the proof of Theorem \ref{thm:AkW}.

\begin{proof}[Proof of Theorem \ref{thm:AkW}]
Fix a net $(P_\eta)_{\eta \in D}$ in $\mathcal{P}^1(S)$ and $P \in \mathcal{P}^1(S)$.

One readily sees that 

$$\lambda_\kappa(P_\eta \rightarrow P) \leq \lambda_W(P_\eta \rightarrow P).$$

We now establish the reverse inequality, which will finish the proof. Fix $\varepsilon > 0$. According to Lemma \ref{lem:seqsup}, choose a net  $(f_\eta)_{\eta \in D}$ in $\kappa_a(S,\mathbb{R})$, a directed set $D'$, and a monotonically increasing and cofinal map $h :  D' \rightarrow D$ such that
\begin{equation}
\lambda_{W}(P_\eta \rightarrow P) -  \varepsilon  \leq \lim_{\eta'} \left(\int f_{h(\eta')} dP - \int f_{h(\eta')} dP_{h(\eta')}\right) \\ \leq \lambda_{W}(P_\eta \rightarrow P).\label{eq:LWStarSeq}
\end{equation}
 Furthermore, finite Borel measures on a separable and completely metrizable topological space being tight (\cite{P67}), take a compact set $K \subset S$ such that
\begin{equation}
\int_{S \setminus K} d(a,\cdot) dP < \varepsilon,\label{eq:outsideK}
\end{equation}
and assume without loss of generality that $a \in K$. By Ascoli's Theorem (\cite{W70}), the collection $\kappa_a(K,\mathbb{R})$ is uniformly compact, which allows us to fix $f \in \kappa_a(K,\mathbb{R})$, a directed set $D''$, and a monotonically increasing and cofinal map $h' : D'' \rightarrow D'$ such that, putting 
\begin{equation}
k = h \circ h^\prime,\label{eq:defk}
\end{equation} 
\begin{equation}
\mylim_{\eta^{\prime \prime}} \sup_{x \in K} \left|f(x) - f_{k(\eta'')}(x)\right| = 0.\label{eq:AscApp}
\end{equation}
Notice that, for all $\eta^{\prime \prime} \in D^{\prime \prime}$, since $f_{k(\eta^{\prime \prime})} \in \kappa_a(S,\mathbb{R})$, $\left|f_{k(\eta^{\prime \prime})}\right| \leq d(a,\cdot)$, whence, by Lemma \ref{lem:supcontractive}, $\sup_{\zeta^{\prime \prime} \succeq {\eta''}} f_{k(\zeta^{\prime \prime})}$ and $\myinf_{\zeta'' \succeq \eta''} f_{k(\zeta'')}$ also belong to $\kappa_a(S,\mathbb{R})$. Furthermore, by (\ref{eq:AscApp}) and Lemma \ref{lem:DiniApp},
\begin{equation*}
\mylim_{\eta^{\prime \prime}} \sup_{x \in K} \left|\left(\sup_{\zeta'' \succeq \eta''} f_{k(\zeta'')}\right)(x) - \left(\myinf_{\zeta'' \succeq \eta''} f_{k(\zeta'')}\right)(x)\right| = 0,
\end{equation*}
so that we can fix $\eta^{\prime \prime}_0 \in D^{\prime \prime}$ such that
\begin{equation}
 \sup_{x \in K} \left|\left(\sup_{\zeta'' \succeq \eta''_0} f_{k(\zeta'')}\right)(x) - \left(\myinf_{\zeta'' \succeq \eta''_0} f_{k(\zeta'')}\right)(x)\right|  < \varepsilon.\label{eq:unifcvcon}
\end{equation} 
For each $\eta^{\prime \prime} \in D^{\prime \prime}$ with $\eta^{\prime \prime} \succeq \eta^{\prime \prime}_0$, 
\begin{eqnarray}
\lefteqn{\left(\int f_{k(\eta'')} dP - \int f_{k(\eta'')} dP_{k(\eta'')}\right)}\label{eq:startbigineq}\\
&\leq& \int \sup_{\zeta'' \succeq \eta''_0} f_{k(\zeta'')} dP - \int \myinf_{\zeta'' \succeq \eta''_0} f_{k(\zeta'')} dP_{k(\eta'')}\nonumber\\
&=& \int \left(\sup_{\zeta'' \succeq \eta''_0} f_{k(\zeta'')} - \myinf_{\zeta'' \succeq \eta''_0} f_{k(\zeta'')}\right) dP\nonumber\\
&& + \left( \int \myinf_{\zeta'' \succeq \eta''_0} f_{k(\zeta'')} dP - \int \myinf_{\zeta''\succeq \eta''_0} f_{k(\zeta'')} dP_{k(\eta'')}\right)\nonumber\\
&=& \int_{K} \left(\sup_{\zeta'' \succeq \eta''_0} f_{k(\zeta'')} - \myinf_{\zeta'' \succeq \eta''_0} f_{k(\zeta'')}\right) dP\nonumber\\
&& + \int_{S \setminus K} \left(\sup_{\zeta'' \succeq \eta''_0} f_{k(\zeta'')} - \myinf_{\zeta'' \succeq \eta''_0} f_{k(\zeta'')}\right) dP\nonumber\\
&& + \left( \int \myinf_{\zeta'' \succeq \eta''_0} f_{k(\zeta'')} dP - \int \myinf_{\zeta'' \succeq \eta''_0} f_{k(\zeta'')} dP_{k(\eta'')}\right).\nonumber
\end{eqnarray}
By (\ref{eq:unifcvcon}),
\begin{equation}
\int_{K} \left(\sup_{\zeta'' \succeq \eta''_0} f_{k(\zeta'')} - \myinf_{\zeta'' \succeq \eta''_0} f_{k(\zeta'')}\right) dP < \varepsilon.\label{eq:onK}
\end{equation}
Since $\sup_{\zeta'' \succeq \eta''_0} f_{k(\zeta'')}$ and $\myinf_{\zeta'' \succeq \eta''_0} f_{k(\zeta'')}$ belong to $\kappa_a(S,\mathbb{R})$, and using (\ref{eq:outsideK}),
\begin{equation}
\int_{S \setminus K} \left(\sup_{\zeta'' \succeq \eta''_0} f_{k(\zeta'')} - \myinf_{\zeta'' \succeq \eta''_0} f_{k(\zeta'')}\right) dP \leq 2 \int_{S \setminus K} d(a,\cdot) dP < 2 \varepsilon.\label{eq:AppOutsideK}
\end{equation}
Plugging in (\ref{eq:onK}) and (\ref{eq:AppOutsideK}) in (\ref{eq:startbigineq}), taking superior limits, and again using the fact that $\myinf_{\zeta'' \succeq \eta''_0} f_{k(\zeta'')}$ belongs to $\kappa_a(S,\mathbb{R})$,
\begin{eqnarray}
\lefteqn{\limsup_{\eta^{\prime \prime}} \left(\int f_{k(\eta'')} dP - \int f_{k(\eta'')} dP_{k(\eta'')}\right)} \label{eq:AlmostDone}\\
&\leq&3 \varepsilon + \limsup_{\eta^{\prime \prime}} \left( \int \myinf_{\zeta'' \succeq \eta''_0} f_{k(\zeta'')} dP - \int \myinf_{\zeta'' \succeq \eta''_0} f_{k(\zeta'')} dP_{k(\eta'')}\right)\nonumber\\
&\leq& 3 \varepsilon + \lambda_\kappa(P_{k(\eta'')} \rightarrow P)\nonumber\\
&\leq& 3 \varepsilon + \lambda_\kappa(P_\eta \rightarrow P).\nonumber
\end{eqnarray}
Using  (\ref{eq:defk}) and the fact that a subnet of a convergent net converges to the same limit point,
\begin{eqnarray*}
\lefteqn{\limsup_{\eta^{\prime \prime}} \left(\int f_{k(\eta'')} dP - \int f_{k(\eta'')} dP_{k(\eta'')}\right)}\\
&& = \lim_{\eta' } \left(\int f_{h(\eta')} dP - \int f_{h(\eta')} dP_{h(\eta')}\right),
\end{eqnarray*}
which, by (\ref{eq:LWStarSeq}) and (\ref{eq:AlmostDone}), yields
\begin{equation*}
\lambda_{W}(P_\eta \rightarrow P) \leq  \lambda_{\kappa}(P_{\eta} \rightarrow P) + 4\varepsilon.
\end{equation*}
This finishes the proof by arbitrariness of $\varepsilon > 0$.
\end{proof}

\section{The relative Hausdorff measure of non-compactness for the Wasserstein metric}\label{sec:HMWM}

In a complete metric space $(X,m)$, the {\em relative Hausdorff measure of non-compactness} of a set $A \subset X$  (\cite{BG80},\cite{WW96}) is given by 
\begin{equation*}
\mu_{\text{\upshape{H}},m}(A) = \myinf_{\substack{Y \subset X\\\textrm{finite}}} \sup_{a \in A} \myinf_{y \in Y} m(y,a),
\end{equation*}
and this measure coincides with the relative measure of non-compactness for the approach structure underlying $m$ (Subsection \ref{subsec:compactness}). One readily verifies that $A$ is $m$-bounded if and only if $\mu_{\textrm{\upshape{H}},m}(A)  < \infty$. Furthermore, by Theorem \ref{thm:compactness}, $A \subset X$ is $m$-relatively compact if and only if $\mu_{\textrm{\upshape{H}},m}(A) = 0$.

The relative (Hausdorff) measure of non-compactness of a set of probability measures was studied for the weak approach structure in \cite{BLV11}, for the continuity approach structure in \cite{B16}, and for the parametrized Prokhorov metric in \cite{B16I}.

Here we are interested in finding a meaningful expression for the relative Hausdorff measure of non-compactness for the Wasserstein metric. More precisely, keeping the notation from the previous section, we will study, for a set $\Gamma \subset \mathcal{P}^1(S)$, the number 
\begin{equation}
\mu_{\text{\upshape H},W}(\Gamma) = \myinf_{\substack{\Phi \subset \mathcal{P}^1(S)\\\textrm{finite}}} \sup_{P \in \Gamma} \myinf_{Q \in \Phi} W(P,Q).\label{eq:HMWdef}
\end{equation}
In doing so, we will make use of the following corollary of Theorem \ref{thm:AkW}.

\begin{thm}
For $\Gamma \subset \mathcal{P}^1(S)$,
\begin{equation}
\mu_{\text{\upshape H},W}(\Gamma) = \sup_{(P_\eta)_\eta} \myinf_{(P_{h(\eta^\prime)})_{\eta^\prime}} \myinf_{P \in \mathcal{P}^1(S)} \sup_{f \in \kappa(S,\mathbb{R})} \limsup_{\eta^\prime} \left|\int f dP - \int f dP_{h(\eta')}\right|,\label{eq:ExprHMW}
\end{equation}
the supremum taken over all nets $(P_\eta)_\eta$ in $\Gamma$, and the first infimum over all subnets $(P_{h(\eta^\prime)})_{\eta^\prime}$ of $(P_\eta)_\eta$. 
\end{thm}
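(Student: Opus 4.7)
The formula to be proved is essentially a substitution identity, and the plan is to chain together three results already established in the paper.

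First, I would observe that since $W$ is a (complete) metric on $\mathcal{P}^1(S)$, the quantity $\mu_{\text{\upshape H},W}(\Gamma)$ as defined in (\ref{eq:HMWdef}) coincides with the relative measure of non-compactness $\mu_{rc}(\Gamma)$ computed in the approach space $(\mathcal{P}^1(S),\mathcal{A}_W)$, as recorded at the end of Subsection \ref{subsec:compactness}. Note that local boundedness is unproblematic here: because both $P, Q \in \mathcal{P}^1(S)$ have finite first moment, the Kantorovich dual formula (\ref{eq:defWass}) together with the triangle inequality yields $W(P,Q) \leq \int d(a,\cdot)\, dP + \int d(a,\cdot)\, dQ < \infty$.

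Next, I would apply Theorem \ref{thm:compactness} directly to $(\mathcal{P}^1(S),\mathcal{A}_W)$ to obtain
\begin{equation*}
\mu_{\text{\upshape H},W}(\Gamma) = \sup_{(P_\eta)_\eta} \myinf_{(P_{h(\eta^\prime)})_{\eta^\prime}} \myinf_{P \in \mathcal{P}^1(S)} \lambda_W(P_{h(\eta')} \rightarrow P),
\end{equation*}
where the supremum ranges over nets in $\Gamma$ and the first infimum over subnets. This reduces the task to rewriting $\lambda_W$ in the desired explicit form.

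For the last step, I would invoke Theorem \ref{thm:AkW}, which asserts that $\mathcal{A}_W = \mathcal{A}_\kappa$, and in particular $\lambda_W = \lambda_\kappa$ pointwise on nets. The explicit expression for $\lambda_\kappa$ supplied by Proposition \ref{pro:limopkappa} then gives
\begin{equation*}
\lambda_W(P_{h(\eta')} \rightarrow P) = \lambda_\kappa(P_{h(\eta')} \rightarrow P) = \sup_{f \in \kappa(S,\mathbb{R})} \limsup_{\eta'} \left|\int f\, dP - \int f\, dP_{h(\eta')}\right|,
\end{equation*}
and substituting this in the previous display produces (\ref{eq:ExprHMW}).

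Since all substantive work has already been done in Theorem \ref{thm:AkW}, there is no real obstacle left; the only point requiring a moment of care is verifying that the hypotheses of Theorem \ref{thm:compactness} apply (local boundedness and the fact that we are working with the metrizable approach structure $\mathcal{A}_W$), both of which are immediate from the completeness of $W$ and the finite-first-moment assumption built into $\mathcal{P}^1(S)$.
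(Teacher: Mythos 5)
Your proposal is correct and follows essentially the same route as the paper: it chains Theorem \ref{thm:AkW} (identifying $\mathcal{A}_W$ with $\mathcal{A}_\kappa$), Theorem \ref{thm:compactness}, and Proposition \ref{pro:limopkappa}, merely applying them in a slightly different order and adding a harmless extra check of local boundedness. No gaps.
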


\begin{proof}
By Theorem \ref{thm:AkW}, the contractive approach structure $\mathcal{A}_\kappa$ is metrized by the Wasserstein metric $W$. Therefore, $\mu_{\text{\upshape{H}},W}$ coincides with the relative measure of non-compactness for $\mathcal{A}_\kappa$, which, by Theorem \ref{thm:compactness} and Proposition \ref{pro:limopkappa}, coincides with the right-hand side of (\ref{eq:ExprHMW}).
\end{proof}

We say that a collection $\Gamma \subset \mathcal{P}^1(S)$ is {\em uniformly integrable} iff there exists $a \in S$ such that for each $\varepsilon > 0$ there exists a bounded set $B \subset S$ such that for each $P \in \Gamma$
$$\int_{S \setminus B} d(a,\cdot) dP < \varepsilon,$$
and we define the {\em measure of non-uniform integrability} by 
$$\mu_{\text{\upshape{UI}}}(\Gamma) = \myinf_{a \in S} \myinf_{B} \sup_{P \in \Gamma} \int_{S \setminus B} d(a,\cdot) dP,$$
the second infimum taken over all bounded sets $B \subset S$. Of course, $\mu_{\text{\upshape{UI}}}(\Gamma) = 0$ if and only if $\Gamma$ is uniformly integrable.

For $a \in S$ and $R \in \mathbb{R}^+_0$, let $B(a,R)$ stand for the {\em open ball with center $a$ and radius $R$}, that is 
$$B(a,R) = \{x \in S \mid d(a,x) < R\},$$
and $B^\star(a,R)$ for the {\em closed ball with center $a$ and radius $R$}, that is
$$B^\star(a,R) = \{x \in S \mid d(a,x) \leq R\}.$$

\begin{lem}\label{lem:Sioen}
Fix $a \in S$ and $R \in \mathbb{R}^+_0$. Define the map $\phi_{a,R} : S \rightarrow \mathbb{R}$ by
\begin{displaymath}
\phi_{a,R}(x) = \left\{\begin{array}{clrr}      
\left(1 - \frac{R}{d(a,x)}\right)^+ &\textrm{ if }& x \neq a\\       
0 &\textrm{ if }& x = a
\end{array}\right..
\end{displaymath}
Then, for each $0 < \varepsilon < 1$,
\begin{equation*}
(1 - \varepsilon) 1_{S \setminus B^\star(a,R/\varepsilon)} \leq \phi_{a,R} \leq 1_{S \setminus B^\star(a,R)},
\end{equation*}
and the map  $\psi_{a,R} : S \rightarrow \mathbb{R}$, defined by 
$$\psi_{a,R}(x) = \phi_{a,R}(x) d(a,x),$$ 
is a contraction.
\end{lem}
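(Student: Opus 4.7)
The plan is to split the lemma into its two independent assertions, verify the sandwich inequality by direct case analysis, and reduce the contractivity claim to the fact that $\psi_{a,R}$ equals a well-known $1$-Lipschitz function of $d(a,\cdot)$.

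For the first assertion, I would fix $0 < \varepsilon < 1$ and argue pointwise, distinguishing whether the argument lies in the relevant closed ball. The upper bound $\phi_{a,R} \leq 1_{S \setminus B^\star(a,R)}$ is immediate from the definition: when $d(a,x) \leq R$ (including $x = a$), the positive-part truncation forces $\phi_{a,R}(x) = 0$, while for $d(a,x) > R$ one has $\phi_{a,R}(x) = 1 - R/d(a,x) < 1$, which matches the right-hand side. The lower bound is trivial on $B^\star(a,R/\varepsilon)$ since $\phi_{a,R} \geq 0$, and off this ball $d(a,x) > R/\varepsilon$ gives $R/d(a,x) < \varepsilon$, hence $\phi_{a,R}(x) > 1-\varepsilon$, which is exactly the required estimate.

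For the second assertion, the key observation is that for every $x \in S$,
\begin{equation*}
\psi_{a,R}(x) = \phi_{a,R}(x)\,d(a,x) = \bigl(d(a,x) - R\bigr)^+,
\end{equation*}
where one checks the identity separately when $d(a,x) \leq R$ (both sides vanish, including the degenerate case $x = a$) and when $d(a,x) > R$ (both sides equal $d(a,x) - R$). From here, $\psi_{a,R}$ is the composition of $x \mapsto d(a,x)$, which is $1$-Lipschitz by the triangle inequality, with the scalar map $t \mapsto (t - R)^+$, which is $1$-Lipschitz on $\mathbb{R}$ because the positive part is a contraction. Composing two contractions yields a contraction, so $|\psi_{a,R}(x) - \psi_{a,R}(y)| \leq |d(a,x) - d(a,y)| \leq d(x,y)$.

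There is essentially no major obstacle here; the only subtlety is bookkeeping the definition of $\phi_{a,R}$ at the distinguished point $a$ so that the identity $\psi_{a,R} = (d(a,\cdot) - R)^+$ holds globally on $S$, and afterwards the contractivity is a one-line composition argument.
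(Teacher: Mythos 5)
Your proof is correct, and since the paper dismisses this lemma with ``straightforward verification,'' your argument is exactly the verification the authors had in mind. The identity $\psi_{a,R} = (d(a,\cdot)-R)^{+}$ together with the $1$-Lipschitz character of $t \mapsto (t-R)^{+}$ and of $d(a,\cdot)$ is a clean way to organize it.
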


\begin{proof}
This follows by straightforward verification.
\end{proof}

\begin{lem}\label{lem:fRgR}
Let $a \in \mathbb{R}$ and $f \in \kappa_a(S,\mathbb{R})$. Put, for each $R \in \mathbb{R}^+_0$,
$$f_R = (f \wedge R) \vee (-R) \text{ and } g_R = f - f_R.$$
Then $f_R \rightarrow f$ pointwise as $R \rightarrow \infty$, and, for each $R \in \mathbb{R}^+_0$, the following assertions hold.
\begin{itemize}
\item[(a)] $f_R \in \mathcal{C}_b(S,\mathbb{R}),$
\item[(b)] $g_R \in \kappa_a(S,\mathbb{R}),$
\item[(c)] $f = f_R + g_R,$
\item[(d)] $\left|f_R\right| \leq \left|f\right|,$
\item[(e)] $g_R = g_R 1_{S \setminus B^\star(a,R)}.$
\end{itemize}
\end{lem}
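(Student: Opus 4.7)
The plan is to verify each of the six assertions (together with the pointwise convergence) essentially by inspecting the truncation map $t_R : \mathbb{R} \to \mathbb{R}$ defined by $t_R(u) = (u \wedge R) \vee (-R)$, and by using the observation that $f_R = t_R \circ f$. Everything except (b) is routine bookkeeping; the only item requiring any thought is the contractivity of $g_R$.

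The easy parts first. Pointwise convergence $f_R \to f$ is immediate from the bound $|f(x)| = |f(x) - f(a)| \leq d(a,x) < \infty$ (valid because $f \in \kappa_a(S,\mathbb{R})$), since $f_R(x) = f(x)$ as soon as $R > |f(x)|$. For (a), $f_R = t_R \circ f$ is continuous (as $f$ is $1$-Lipschitz and $t_R$ is continuous) and bounded by $R$ by construction. Assertion (c) is simply the definition of $g_R$. For (d), one uses the trivial inequality $|t_R(u)| \leq |u|$ for $u \in \mathbb{R}$. For (e), if $d(a,x) \leq R$ then $|f(x)| \leq d(a,x) \leq R$, so $t_R(f(x)) = f(x)$ and hence $g_R(x) = 0$; thus $g_R$ vanishes on $B^\star(a,R)$, which is exactly the identity $g_R = g_R \, 1_{S \setminus B^\star(a,R)}$.

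For (b), the cleanest route is to show that the map $s_R : \mathbb{R} \to \mathbb{R}$, $s_R(u) = u - t_R(u)$, is itself $1$-Lipschitz: one checks that $s_R$ equals $u - R$ for $u \geq R$, vanishes on $[-R,R]$, and equals $u + R$ for $u \leq -R$, so it is continuous and piecewise linear with slopes in $\{0, 1\}$. Since $g_R = s_R \circ f$ is then a composition of two $1$-Lipschitz maps, it is itself a contraction $S \to \mathbb{R}$; and $g_R(a) = s_R(f(a)) = s_R(0) = 0$, so $g_R \in \kappa_a(S,\mathbb{R})$. I do not anticipate any real obstacle: the one trick to spot is that $s_R$ is itself $1$-Lipschitz, which avoids an otherwise tedious four-case analysis on the positions of $f(x), f(y)$ relative to the interval $[-R, R]$.
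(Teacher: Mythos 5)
Your proof is correct, and all parts except (b) coincide with what the paper does (the paper declares (a), (c), (d) trivial and proves (e) exactly as you do, via $|f|\leq d(a,\cdot)$). For (b) you take a genuinely different and cleaner route: the paper writes out $g_R$ explicitly in the three regimes $f(x)<-R$, $-R\leq f(x)\leq R$, $R<f(x)$ and then verifies $|g_R(x)-g_R(y)|\leq d(x,y)$ by a case analysis on the positions of $f(x)$ and $f(y)$ (carrying out two of the cases and dismissing the rest as analogous), whereas you factor $g_R=s_R\circ f$ with $s_R(u)=u-t_R(u)$ and observe that $s_R$ is continuous, piecewise linear with slopes in $\{0,1\}$, hence $1$-Lipschitz, so that $g_R$ is a composition of $1$-Lipschitz maps. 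Your argument buys brevity and avoids the combinatorial explosion of cases; it also makes explicit the check $g_R(a)=s_R(0)=0$, which the paper leaves implicit. The paper's version buys nothing beyond being self-contained at the level of first principles, and in fact its displayed formula for $g_R$ is exactly your description of $s_R$ read off along $f$, so the two proofs have the same mathematical content organized differently.
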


\begin{proof}
(a), (c), and (d) are trivial.

To prove (b), notice that 
\begin{displaymath}
f_R(x) = \left\{\begin{array}{clrrr}      
- R &\textrm{ if }& f(x) < -R\\       
f(x) &\textrm{ if }& -R \leq f(x) \leq R\\
R &\textrm{ if }& R < f(x)
\end{array}\right.
\end{displaymath}
and
\begin{equation}
g_R(x) = \left\{\begin{array}{clrrr}      
f(x) + R &\textrm{ if }& f(x) < -R\\       
0 &\textrm{ if }& -R \leq f(x) \leq R\\
f(x) - R &\textrm{ if }& R < f(x)
\end{array}\right..\label{eq:gR}
\end{equation}
Take $x, y \in S$. 

If $f(x) <  - R$ and $- R \leq f(y) \leq R$, then, $f$ being a contraction,
$$-d(x,y) \leq  f(x) - f(y) \leq f(x) + R = g_R(x) - g_R(y) = f(x) + R < 0,$$
from which we conclude that
$$\left|g_R(x) - g_R(y)\right| \leq d(x,y).$$

If $f(x) < - R$ and $f(y) > R$, then, $f$ being a contraction,
\begin{eqnarray*}
0 < f(y) - f(x) - 2R  &=& g_R(y) - g_R(x)\\
 &=& f(y) - f(x) - 2R < f(y) - f(x) \leq d(x,y),
\end{eqnarray*}
from which we again conclude that
$$\left|g_R(x) - g_R(y)\right| \leq d(x,y).$$

The other cases are dealt with analogously, which finishes the proof of (b).

To establish (e), observe that, since $f \in \kappa_a(S,\mathbb{R})$, $\left|f\right| \leq d(a,\cdot)$. Therefore, if $x \in B^\star(a,R)$, then $-R \leq f(x) \leq R$, whence, by (\ref{eq:gR}), $g_R(x) = 0$. This proves (e).

\end{proof}

We will now prove the main result of this section. Recall that a set $\Gamma \subset \mathcal{P}(S)$ is said to be {\em tight} iff for each $\varepsilon > 0$ there exists a compact set $K \subset S$ such that $P(S \setminus K) < \epsilon$ for each $P \in \Gamma$.

\begin{thm}\label{thm:muUImuH}
For $\Gamma \subset \mathcal{P}^1(S)$,
\begin{equation}
\mu_{\text{\upshape{UI}}}(\Gamma) \leq \mu_{\text{\upshape{H}},W}(\Gamma).\label{eq:lbW}
\end{equation}
Moreover, the inequality in (\ref{eq:lbW}) becomes an equality if $\Gamma$ is tight.
\end{thm}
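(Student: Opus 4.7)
I would treat the two parts of the statement separately: the inequality (\ref{eq:lbW}), which holds for arbitrary $\Gamma$, and the converse under tightness.

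For (\ref{eq:lbW}), the plan is to use the contraction $\psi_{a,R}$ from Lemma \ref{lem:Sioen} as a test function in the Kantorovich-Rubinstein dual (\ref{eq:defWass}). Fix $\beta > \mu_{\text{H},W}(\Gamma)$ and a finite $\Phi = \{Q_1, \ldots, Q_n\} \subset \mathcal{P}^1(S)$ with $\sup_{P \in \Gamma} \min_i W(P, Q_i) < \beta$. Fix $a \in S$ and, exploiting that each $Q_i \in \mathcal{P}^1(S)$, choose $R_0$ so large that $\max_i \int_{S \setminus B^\star(a,R_0)} d(a,\cdot)\, dQ_i < \delta$ for a given $\delta > 0$. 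For each $P \in \Gamma$ and the corresponding nearest $Q_i$, testing (\ref{eq:defWass}) against $\psi_{a,R}$ and combining with the two-sided estimates of Lemma \ref{lem:Sioen} gives, for every $R \geq R_0$ and every $\varepsilon \in (0,1)$,
\begin{equation*}
(1-\varepsilon)\int_{S \setminus B^\star(a,R/\varepsilon)} d(a,\cdot)\, dP \leq \int \psi_{a,R}\, dP \leq \int \psi_{a,R}\, dQ_i + W(P,Q_i) \leq \delta + \beta.
\end{equation*}
Taking $\sup_{P \in \Gamma}$, then an infimum over $R$, and finally letting $\delta, \varepsilon \to 0$ yields $\mu_{\text{UI}}(\Gamma) \leq \beta$, which suffices.

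For the converse under tightness, the plan is a truncation argument. Given $\beta > \mu_{\text{UI}}(\Gamma)$, pick $a \in S$ and $R_0 > 0$ with $\sup_{P \in \Gamma} \int_{S \setminus B^\star(a,R_0)} d(a,\cdot)\, dP < \beta$, and, by Prokhorov's Theorem applied to the tight family $\Gamma$, choose a compact $K_0 \subset S$ with $\sup_{P \in \Gamma} P(S \setminus K_0) < \delta$, where $\delta > 0$ is to be sent to $0$ later. For each $P \in \Gamma$, define
\begin{equation*}
\widetilde{P} := P|_{K_0} + P(S \setminus K_0)\, \delta_a,
\end{equation*}
which is a probability measure supported on the compact set $K_0 \cup \{a\}$. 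Using the explicit coupling that fixes the mass on $K_0$ and transports the rest to the single point $a$, together with the decomposition of $S \setminus K_0$ into its intersections with $B^\star(a,R_0)$ and with $S \setminus B^\star(a,R_0)$, one finds
\begin{equation*}
W(P, \widetilde{P}) \leq \int_{S \setminus K_0} d(a,\cdot)\, dP \leq R_0\, P(S \setminus K_0) + \int_{S \setminus B^\star(a,R_0)} d(a,\cdot)\, dP < R_0\delta + \beta.
\end{equation*}

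Now $\{\widetilde{P} : P \in \Gamma\}$ is contained in $\mathcal{P}(K_0 \cup \{a\})$, which is Wasserstein-compact: since $K_0 \cup \{a\}$ is compact, every element of $\kappa(K_0\cup\{a\},\mathbb{R})$ is bounded continuous, so by Theorem \ref{thm:AkW} (applied to the compact subspace) the Wasserstein topology and the weak topology coincide on $\mathcal{P}(K_0 \cup \{a\})$, and the latter is weakly compact by Prokhorov. Hence for every $\eta > 0$ one can cover $\{\widetilde{P} : P \in \Gamma\}$ by finitely many $W$-balls of radius $\eta$; their centers form a finite $\Phi_0 \subset \mathcal{P}(K_0 \cup \{a\}) \subset \mathcal{P}^1(S)$, and the triangle inequality gives $\mu_{\text{H},W}(\Gamma) \leq \sup_{P \in \Gamma}\min_{Q \in \Phi_0} W(P,Q) \leq R_0\delta + \beta + \eta$. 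Letting $\eta \to 0$, then $\delta \to 0$ (with $R_0$ already fixed), and finally $\beta \to \mu_{\text{UI}}(\Gamma)$ yields $\mu_{\text{H},W}(\Gamma) \leq \mu_{\text{UI}}(\Gamma)$, completing the proof.

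The delicate step I anticipate is ensuring that no spurious multiplicative factor (such as a $2$) appears in the bound on $W(P,\widetilde{P})$: this is what forces the excess mass in the truncation to be concentrated at the \emph{single} point $a$, rather than being left dispersed, and forces the auxiliary parameter $\delta$ to be chosen only \emph{after} $R_0$ has been fixed so that the term $R_0\delta$ can genuinely be absorbed.
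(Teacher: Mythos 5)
Your proof of the inequality $\mu_{\text{UI}}(\Gamma)\le\mu_{\text{H},W}(\Gamma)$ is essentially the paper's own: the same test function $\psi_{a,R}$ from Lemma \ref{lem:Sioen}, the same choice of $R$ adapted to the finite set $\Phi$ via the first moments of its members, and the same two-sided estimate followed by letting the small parameters go to $0$. For the converse under tightness you take a genuinely different, and correct, route. The paper stays inside the approach-theoretic framework: it rewrites $\mu_{\text{H},W}$ through the net/subnet formula of Theorem \ref{thm:compactness} combined with Theorem \ref{thm:AkW}, extracts a weakly convergent subnet by Prokhorov's Theorem, verifies that the weak limit has a finite first moment, and then truncates the \emph{test functions} ($f=f_R+g_R$, Lemma \ref{lem:fRgR}) to show $\lambda_\kappa(P_{h(\eta')}\to P)\le\alpha$; this is where the full strength of $\lambda_\kappa=\lambda_W$ on the non-compact space $S$ is used. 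You instead truncate the \emph{measures}: you push the mass outside a compact $K_0$ onto the single atom $a$, bound $W(P,\widetilde{P})$ by an explicit coupling (thus using the primal formulation of $W$, which the paper states but never exploits in a proof), and reduce to total boundedness of $\mathcal{P}(K_0\cup\{a\})$ in $W$ to produce the finite net $\Phi_0$ demanded by the covering definition (\ref{eq:HMWdef}). This bypasses Theorem \ref{thm:compactness} and Lemma \ref{lem:fRgR} entirely and needs Theorem \ref{thm:AkW} only in the compact case, where it reduces to the classical fact that $W$ metrizes weak convergence; your derivation of that fact is valid, modulo the easy (and worth stating) observation that the intrinsic Wasserstein metric of $K_0\cup\{a\}$ coincides with the restriction of $W$ from $\mathcal{P}^1(S)$, which is immediate from the coupling formula. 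Your closing remark about choosing $\delta$ only after $R_0$ so that $R_0\delta$ can be absorbed, and about concentrating the excess mass at a single point, identifies exactly the right places where the argument could otherwise leak a constant.
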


\begin{proof}

Suppose that, for $\alpha > 0$,
\begin{equation}
\mu_{\text{\upshape{H}},W}(\Gamma) < \alpha,\label{eq:hwalpha}
\end{equation}
and fix $a \in S$ and $0 < \varepsilon < 1$. By (\ref{eq:defWass}) and (\ref{eq:HMWdef}), there exists a finite set $\Phi \subset \mathcal{P}^1(S)$ such that for each $P \in \Gamma$ there exists $Q \in \Phi$ with the property that 
\begin{equation}
\sup_{f \in \kappa(S,\mathbb{R})} \left|\int f dP - \int f dQ\right| < \alpha.\label{eq:GammaFinite}
\end{equation}
Since $\Phi$ is finite, we can choose $R > 0$ such that
\begin{equation}
\forall Q \in \Phi : \int_{S \setminus B^\star(a,R)} d(a,\cdot) dQ < \varepsilon.\label{eq:leqepsilon}
\end{equation}
Furthermore, according to Lemma \ref{lem:Sioen}, fix $\phi_{a,R} : S \rightarrow \mathbb{R}$ such that
\begin{equation}
(1 - \varepsilon) 1_{S \setminus B^\star(a,R/\varepsilon)} \leq \phi_{a,R} \leq 1_{S \setminus B^\star(a,R)}\label{eq:phibetween}
\end{equation}
and 
\begin{equation}
\psi_{a,R} = \phi_{a,R} d(a,\cdot) \in \kappa(S,\mathbb{R}).\label{eq:contractivemap}
\end{equation}
Fix $P \in \Gamma$. Take $Q \in \Phi$ such that (\ref{eq:GammaFinite}) holds. Then, by (\ref{eq:phibetween}), (\ref{eq:contractivemap}), and (\ref{eq:leqepsilon}), 
\begin{eqnarray*}
\lefteqn{(1 - \varepsilon) \int_{S \setminus B^\star(a,R/\varepsilon)} d(a,\cdot) dP}\\
&\leq& \int_S \phi_{a,R} d(a,\cdot) dP\\
&<& \int_S \phi_{a,R} d(a,\cdot) dQ + \alpha\\ 
&\leq& \int_{S \setminus B^\star(a,R)} d(a,\cdot) dQ + \alpha\\
&<& \varepsilon + \alpha,
\end{eqnarray*}
which proves that 
$$\mu_{\text{\upshape{UI}}}(\Gamma) < (\varepsilon + \alpha)/(1 - \varepsilon).$$
Since $\alpha$ was arbitrarily chosen such that (\ref{eq:hwalpha}) holds, we have shown (\ref{eq:lbW}).

Now assume that $\Gamma$ is tight. We will prove that
\begin{equation}
\mu_{\text{\upshape{H}},W}(\Gamma) \leq \mu_{\text{\upshape{UI}}}(\Gamma).\label{eq:HWUnderUI}
\end{equation}
Suppose that, for $\alpha > 0$, 
\begin{equation}
\mu_{\text{\upshape{UI}}}(\Gamma) < \alpha,\label{eq:UIleqAlpha}
\end{equation}
that is, there exist $a \in S$ and a bounded set $B \subset S$ such that
\begin{equation}
\forall P \in \Gamma : \int_{S \setminus B} d(a,\cdot) dP < \alpha.\label{eq:onBleqalpha}
\end{equation}
Take an arbitrary net $(P_\eta)_\eta$ in $\Gamma$. Since $\Gamma$ is tight, it is, by Prokhorov's Theorem (\cite{P67},\cite{B99}), weakly relatively compact. We thus find a subnet $(P_{h(\eta')})_{\eta'}$ and $P \in \mathcal{P}(S)$ such that $P_{h(\eta')} \stackrel{w}{\rightarrow} P$. 

We first show that $P \in \mathcal{P}^1(S)$. For $R \in \mathbb{R}^+$, 
\begin{equation}
\int_S \left(d(a,\cdot) \wedge R\right) dP = \lim_{\eta^\prime} \int_S \left(d(a,\cdot) \wedge R\right) dP_{h(\eta^\prime)} \leq \liminf_{\eta^\prime} \int_S d(a,\cdot) dP_{h(\eta^\prime)}.\label{eq:FinMom1}
\end{equation}
Furthermore, by the fact that $B$ is bounded and (\ref{eq:onBleqalpha}), for each $\eta^\prime$,
\begin{equation}
\int_S d(a,\cdot) dP_{h(\eta^\prime)} = \int_{B} d(a,\cdot) dP_{h(\eta^\prime)} + \int_{S \setminus B} d(a,\cdot) dP_{h(\eta^\prime)} \leq \sup_{x \in B} d(a,x)  + \alpha.\label{eq:FinMom2}
\end{equation}
Letting $R \uparrow \infty$ and using the Monotone Convergence Theorem, (\ref{eq:FinMom1}) and (\ref{eq:FinMom2}) yield
$$\int_S d(a,\cdot) dP \leq \sup_{x \in B} d(a,x)  + \alpha,$$
and we conclude that $P \in \mathcal{P}^1(S)$.

We now establish that
\begin{equation}
\lambda_\kappa(P_{h(\eta')} \rightarrow P) < \alpha.\label{eq:LambdaKappaUnderAlpha}
\end{equation}
Fix $f \in \kappa_a(S,\mathbb{R})$ and put, for each $R \in \mathbb{R}^+$, 
$$f_R = (f \wedge R) \vee (-R) \text{ and } g_R = f - f_R.$$
Then $f_R \rightarrow f$ pointwise as $R \rightarrow \infty$, and the assertions (a)--(e) in Lemma \ref{lem:fRgR} hold. By (b), (c), and (e) in Lemma \ref{lem:fRgR}, for all $R \in \mathbb{R}^+_0$ and $\eta'$,
\begin{eqnarray*}
\int f dP_{h(\eta')} &=& \int f_R dP_{h(\eta')} + \int g_R dP_{h(\eta')}\\
 &\leq& \int f_R dP_{h(\eta')} + \int_{S \setminus B^\star(a,R)} d(a,\cdot) dP_{h(\eta')},
\end{eqnarray*}
which, by the fact that $P_{h(\eta^\prime)} \stackrel{w}{\rightarrow} P$ and (a) in Lemma \ref{lem:fRgR}, yields
\begin{equation*}
\limsup_{\eta^\prime}  \int f dP_{h(\eta')} \leq \int f_R dP + \limsup_{\eta^\prime} \int_{S \setminus B^\star(a,R)} d(a,\cdot) dP_{h(\eta')}.
\end{equation*}
Letting $R \uparrow \infty$ and using (d) in Lemma \ref{lem:fRgR}, the Dominated Convergence Theorem, and (\ref{eq:onBleqalpha}), we infer
\begin{equation*}
\limsup_{\eta^\prime} \int f dP_{h(\eta')} \leq \int f dP + \alpha,
\end{equation*}
which, by (\ref{eq:NewFormLambdaKappa}), gives (\ref{eq:LambdaKappaUnderAlpha}). Now it follows from (\ref{eq:ExprHMW}) that 
$$\mu_{\text{\upshape{H}},W}(\Gamma) \leq \alpha.$$ 
Since $\alpha$ was arbitrarily chosen such that (\ref{eq:UIleqAlpha}) holds, we have shown (\ref{eq:HWUnderUI}).

\end{proof}

We will give an example which shows that the tightness condition in Theorem \ref{thm:muUImuH} cannot be omitted. For $a \in S$, let $\delta_a$ be the Dirac measure on $S$ putting all its mass on $a$

\begin{lem}\label{lem:WonDeltax}
For $a,b \in S$, 
$$W(\delta_a,\delta_b) = d(a,b).$$
\end{lem}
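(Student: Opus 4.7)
The plan is to invoke the Kantorovich dual formula (\ref{eq:defWass}) and reduce the problem to computing a supremum of $|f(a) - f(b)|$ over contractions $f : S \to \mathbb{R}$. Since integration against a Dirac measure is simply evaluation, (\ref{eq:defWass}) specializes to
$$W(\delta_a,\delta_b) = \sup_{f \in \kappa(S,\mathbb{R})} \left|f(a) - f(b)\right|,$$
so only this supremum must be identified with $d(a,b)$.

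For the upper bound $W(\delta_a,\delta_b) \leq d(a,b)$, I would appeal directly to the definition of $\kappa(S,\mathbb{R})$: every contraction $f$ satisfies $|f(a) - f(b)| \leq d(a,b)$, and the bound is preserved by the supremum. For the matching lower bound, I would exhibit one contraction that attains the value. The canonical choice is $f_0 = d(a,\cdot)$, which belongs to $\kappa(S,\mathbb{R})$ by the triangle inequality (this is the reverse triangle inequality $|d(a,x) - d(a,y)| \leq d(x,y)$). Then $|f_0(a) - f_0(b)| = |0 - d(a,b)| = d(a,b)$, yielding $W(\delta_a,\delta_b) \geq d(a,b)$.

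No serious obstacle is anticipated; the whole argument is an immediate application of the dual formula that has already been recorded. As an alternative of comparable brevity, one may argue from the primal definition, observing that the only Borel probability measure on $S \times S$ with first marginal $\delta_a$ and second marginal $\delta_b$ is $\delta_{(a,b)}$, so the infimum in the definition of $W$ reduces to $\int_{S \times S} d(x,y)\, d\delta_{(a,b)}(x,y) = d(a,b)$.
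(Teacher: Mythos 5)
Your proposal is correct and follows essentially the same route as the paper: apply the dual formula (\ref{eq:defWass}), bound the supremum above by $d(a,b)$ using the contraction property, and attain it with the witness $f = d(a,\cdot)$. The extra remark about the primal formulation (the unique coupling of $\delta_a$ and $\delta_b$ being $\delta_{(a,b)}$) is also valid but not needed.
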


\begin{proof}
We have
$$W(\delta_a,\delta_b) = \sup_{f \in \kappa(S,\mathbb{R})}\left|f(a) - f(b)\right| \leq d(a,b).$$
Moreover, for $f = d(a,\cdot)$,
$$\left|f(a) - f(b)\right| = d(a,b),$$
which proves the desired equality.
\end{proof}

\begin{lem}\label{lem:ConWPDeltaa}
Let $P \in \mathcal{P}^1(S)$, $a\in S$, $M \in \mathbb{R}^+_0$, and $0 < \varepsilon \leq M$. Then
$$P(B(a,M)) < \epsilon/M \Rightarrow W(P,\delta_a) > M - \epsilon.$$
\end{lem}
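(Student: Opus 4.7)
The plan is to exploit the duality formula (\ref{eq:defWass}) for $W$ together with the observation that the map $d(a,\cdot)$ is itself a contraction: by the triangle inequality, $|d(a,x) - d(a,y)| \leq d(x,y)$ for all $x,y \in S$, so $d(a,\cdot) \in \kappa(S,\mathbb{R})$. Applying (\ref{eq:defWass}) with this particular test function, and noting that $\int d(a,\cdot)\,d\delta_a = d(a,a) = 0$, I get the lower bound
$$W(P,\delta_a) \;\geq\; \left|\int d(a,\cdot)\,dP - \int d(a,\cdot)\,d\delta_a\right| \;=\; \int_S d(a,x)\,dP(x).$$

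Next, I split the remaining integral across the partition $S = B(a,M) \cup (S \setminus B(a,M))$. The integrand is nonnegative, so I may discard the contribution from $B(a,M)$ and retain only
$$\int_S d(a,x)\,dP(x) \;\geq\; \int_{S \setminus B(a,M)} d(a,x)\,dP(x) \;\geq\; M \cdot P(S \setminus B(a,M)),$$
using that $d(a,x) \geq M$ on $S \setminus B(a,M)$.

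Finally, I translate the hypothesis $P(B(a,M)) < \varepsilon/M$ into $P(S \setminus B(a,M)) > 1 - \varepsilon/M = (M-\varepsilon)/M$, and substitute to conclude
$$W(P,\delta_a) \;\geq\; M \cdot P(S \setminus B(a,M)) \;>\; M \cdot \frac{M - \varepsilon}{M} \;=\; M - \varepsilon.$$

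There is really no obstacle here: the argument is a one-line consequence of the Kantorovich duality applied to the canonical contraction $d(a,\cdot)$ followed by a Markov-style tail estimate. The only thing worth being careful about is checking that $d(a,\cdot)$ genuinely lies in $\kappa(S,\mathbb{R})$ (which is just the triangle inequality) and that the hypothesis $\varepsilon \leq M$ guarantees $M - \varepsilon \geq 0$ so the strict inequality in the conclusion is informative.
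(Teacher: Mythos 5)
Your proof is correct and is essentially the same as the paper's: both apply the duality formula with the test function $d(a,\cdot)$ to get $W(P,\delta_a) \geq \int_S d(a,\cdot)\,dP$ and then use the Markov-type bound $\int_S d(a,\cdot)\,dP \geq M\,P(S\setminus B(a,M))$. The only difference is that the paper phrases the argument contrapositively (assuming $W(P,\delta_a)\leq M-\varepsilon$ and deducing $P(B(a,M))\geq \varepsilon/M$), whereas you argue directly.
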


\begin{proof}
Suppose that 
$$W(P,\delta_a) \leq M - \epsilon.$$ 
Then
\begin{eqnarray*}
M P(S \setminus B(a,M)) &\leq& \int_{S \setminus B(a,M)} d(a,\cdot) dP\\
&\leq& \int d(a,\cdot) dP\\
&=& \left|\int d(a,\cdot) dP - \int d(a,\cdot) d\delta_a\right|\\
&\leq& M - \epsilon,
\end{eqnarray*}
whence 
$$P(B(a,M)) \geq \epsilon/M,$$
which finishes the proof.
\end{proof}

Let $\mathcal{C}$ be the space of continuous maps of the compact interval $[0,1]$ into $\mathbb{R}$, equipped with the supremum norm $\|\cdot\|_\infty$, defined by
$$\|x\|_\infty = \sup_{t \in [0,1]} \left|x(t)\right|.$$
Then $\mathcal{C}$ is a separable Banach space. 

\begin{thm}
For each $M \in \mathbb{R}^+_0$ there exists $\Gamma \subset \mathcal{P}^1(\mathcal{C})$ such that 
$$\mu_{\text{\upshape UI}}(\Gamma) = 0 \text{ and } \mu_{\text{\upshape{H}},W}(\Gamma) = M.$$
\end{thm}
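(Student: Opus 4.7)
The plan is to exhibit $\Gamma$ as a family of Dirac measures placed at a sequence of well-separated but norm-bounded points of $\mathcal{C}$, and then to read off the lower bound on $\mu_{\textrm{H},W}$ from Lemma \ref{lem:ConWPDeltaa}. Concretely, fix pairwise disjoint open intervals $(J_n)_{n \geq 1}$ in $[0,1]$, and for each $n$ let $e_n \in \mathcal{C}$ be a continuous triangular bump supported in $\overline{J_n}$ with $\|e_n\|_\infty = 1$. Setting $y_n = 2M\,e_n$ and $\Gamma = \{\delta_{y_n} : n \in \mathbb{N}\}$, the disjointness of supports together with $\|e_n\|_\infty = 1$ immediately gives $\|y_n\|_\infty = 2M$ and $\|y_n - y_m\|_\infty = 2M$ for $n \neq m$.

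Uniform integrability is essentially trivial: picking $a$ to be the zero function in $\mathcal{C}$ and $B = B^\star(0,2M)$, every $y_n$ lies in $B$, so $\int_{\mathcal{C} \setminus B} \|\cdot\|_\infty\,d\delta_{y_n}$ vanishes for every $n$, hence $\mu_{\textrm{UI}}(\Gamma) = 0$. For the upper estimate $\mu_{\textrm{H},W}(\Gamma) \leq M$, the right test set is the singleton $\Phi = \{\delta_{y^\ast}\}$, where $y^\ast \in \mathcal{C}$ denotes the constant function identically equal to $M$: a short case split on whether $t \in \mathrm{supp}(e_n)$ or not shows $\|y_n - y^\ast\|_\infty = M$ (on the support, $y_n - y^\ast$ takes values in $[-M,M]$; off the support, it equals $-M$), so Lemma \ref{lem:WonDeltax} yields $W(\delta_{y_n}, \delta_{y^\ast}) = M$ uniformly in $n$.

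The substantive step is the lower bound $\mu_{\textrm{H},W}(\Gamma) \geq M$, which is where the factor $2$ in $y_n = 2M\,e_n$ is needed. Because $\|y_n - y_m\|_\infty = 2M$, the open balls $B(y_n, M)$ are pairwise disjoint by the triangle inequality, so for any $Q \in \mathcal{P}^1(\mathcal{C})$ the series $\sum_n Q(B(y_n, M))$ is bounded by $1$. Given an arbitrary finite $\Phi = \{Q_1,\dots,Q_k\} \subset \mathcal{P}^1(\mathcal{C})$ and $0 < \varepsilon < M$, a pigeonhole argument shows that for each fixed $i$ only finitely many indices $n$ can satisfy $Q_i(B(y_n, M)) \geq \varepsilon/M$; hence there are infinitely many $n$ for which $Q_i(B(y_n, M)) < \varepsilon/M$ \emph{simultaneously for all} $i \leq k$. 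Lemma \ref{lem:ConWPDeltaa} applied with centre $y_n$ and radius $M$ then yields $W(Q_i, \delta_{y_n}) > M - \varepsilon$ for each $i$, so $\sup_{P \in \Gamma} \inf_{Q \in \Phi} W(P,Q) \geq M - \varepsilon$, and letting $\varepsilon \downarrow 0$ and taking the infimum over $\Phi$ concludes. The only real design choice is the calibration of the scaling: $\|y_n\|_\infty = 2M$ together with pairwise distance $2M$ is precisely what simultaneously allows the radius-$M$ balls to be disjoint (for the lower bound) and the constant function $M$ to centre $\Gamma$ at distance exactly $M$ (for the upper bound); I do not anticipate any deeper difficulty beyond getting this balancing right.
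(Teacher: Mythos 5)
Your proposal is correct and follows essentially the same strategy as the paper: Dirac measures at a uniformly bounded, pairwise $2M$-separated sequence in $\mathcal{C}$, with the upper bound read off from a single reference Dirac measure via Lemma \ref{lem:WonDeltax} and the lower bound from the disjointness of the radius-$M$ balls combined with Lemma \ref{lem:ConWPDeltaa}. The only differences are cosmetic: the paper uses step-like functions with values in $[-M,M]$ centred at $\delta_0$, whereas you use disjointly supported bumps of height $2M$ centred at the constant function $M$, and you phrase the final step as a pigeonhole count rather than a decreasing union of balls.
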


\begin{proof}
Define, for $n \in \mathbb{N}_0$,
\begin{displaymath}
x_n(t) = \left\{\begin{array}{clrrr}      
M &\textrm{ if }& 0 \leq t \leq 1 - \frac{1}{n}\\       
- 2 M n (n+1) t + 2 M n^2 - M &\textrm{ if }& 1 - \frac{1}{n}\leq t \leq 1 - \frac{1}{n+1}\\
-M &\textrm{ if }& 1 - \frac{1}{n+1} \leq t \leq 1
\end{array}\right..
\end{displaymath}
Then, for all $m, n \in \mathbb{N}_0$,
\begin{equation}
\|x_n \|_\infty = M\label{eq:nxnleqM}
\end{equation}
and
\begin{equation}
m \neq n \Rightarrow \|x_m - x_n\|_\infty = 2 M.\label{eq:pwdisjoint}
\end{equation}
Put 
$$\Gamma = \{\delta_{x_n} \mid n \in \mathbb{N}_0\}.$$
By (\ref{eq:nxnleqM}),
$$\mu_{\text{\upshape{UI}}}(\Gamma) = 0,$$
and, by Lemma \ref{lem:WonDeltax}, for each $n \in \mathbb{N}_0$,
$$W(\delta_{x_n},\delta_0) =  \|x_n\|_\infty = M,$$
whence, by (\ref{eq:HMWdef}),
\begin{equation}
\mu_{\text{\upshape{H}},W}(\Gamma) \leq M.\label{eq:muWleqM}
\end{equation}
Now, for $0 < \epsilon \leq M$, fix a finite collection $\Phi \subset \mathcal{P}^1(S)$. By (\ref{eq:pwdisjoint}),
$$n \neq m \Rightarrow B_{\mathcal{C}}(x_n,M) \cap B_{\mathcal{C}}(x_m,M) = \emptyset.$$
Thus
$$\cup_{k \geq n} B_{\mathcal{C}}(x_k,M) \downarrow \emptyset \text{ as } n \rightarrow \infty,$$
whence there exists $n_0$ such that, for all $Q \in \Phi$,
$$Q(B_{\mathcal{C}}(x_{n_0}, M)) < \epsilon/M,$$
and by Lemma \ref{lem:ConWPDeltaa} we infer that
$$W(Q,\delta_{x_{n_0}}) > M - \epsilon.$$
From (\ref{eq:HMWdef}) we now deduce that
$$\mu_{\text{\upshape H},W}(\Gamma) \geq M - \epsilon,$$
which, combined with (\ref{eq:muWleqM}), gives
$$\mu_{\text{\upshape{H}},W}(\Gamma) = M.$$
This finishes the proof.
\end{proof}

\end{document}